\newtheorem{theorem}{Theorem}
\newtheorem{definition}{Definition}
\newtheorem{cor}{Corollary}
\newtheorem{proposition}{Proposition}
\newtheorem{remark}{Remark}
\def \beq{ \begin{equation}}
\def \eeq{\end{equation}}
\title{Mass independent 
shapes for relative equilibria\\
in the two dimensional constant positive \\curved three body problem
}
\date{}  
\begin{document}

\maketitle

\author{\centerline{Toshiaki~Fujiwara$^1$ and Ernesto~P\'erez-Chavela$^2$} 
$^1$ College of Liberal Arts and Sciences, Kitasato University, 1-15-1 Kitasato, Sagamihara, 252-0329, Kanagawa, Japan. 

$^2$ Department of Mathematics, Instituto Tecnol\'ogico Aut\'onomo de M\'exico (ITAM), R\'io Hondo 1, M\'exico City, 01080, Mexico.
}

\begin{abstract}
In the planar three-body problem under Newtonian potential,
it is well known that any masses, located at the vertices of an equilateral triangle generates a relative equilibrium, known as the  
Lagrange relative equilibrium.
In fact,
the equilateral triangle
is the unique mass independent shape for a relative equilibrium
in this problem.

The two dimensional positive curved three-body problem, is a natural extension of the Newtonian three-body problem to the sphere 
$\mathbb{S}^2$, where the masses are moving under the influence of the cotangent potential. 
S.~Zhu showed that in 
this problem, equilateral triangle on a rotating meridian 
can form a relative equilibria for any masses. 
This was the first report of mass independent shape on $\mathbb{S}^2$ 
which can form a relative equilibrium.

In this paper, we  show that, in addition to the equilateral triangle,
there exists one isosceles triangle 
on a rotating meridian, with two equal angles seen from the centre of
$\mathbb{S}^2$ given by $2^{-1}\arccos((\sqrt{2}-1)/2)$, which always form a relative equilibrium for any choice of the masses.
Additionally we prove that, the equilateral and the above isosceles relative equilibrium are unique with this characteristic.
We also prove that
each relative equilibrium generated by a mass independent shape
is not isolated from the other relative equilibria.
\end{abstract}

{\bf Keywords} Relative equilibria, Euler configurations, cotangent potential.

{\bf Math. Subject Class 2020:} 70F07, 70F10, 70F15


\section{Introduction}
The two dimensional positive curved three-body problem has been studied 
for several authors, for instance  
\cite{Borisov1,Borisov2,Diacu-EPC1,Diacu-EPC2,Diacu3,
EPC1,EPC2,M-S,tibboel,zhu,zhu2}.
In all 
these papers the masses are moving under the influence of the cotangent potential, which is the natural extension of the planar Newtonian problem to the sphere.

Consider a point $q$ on $\mathbb{S}^2$, $|q|^2=1$.
In the spherical coordinates, $q$ is represented by 
$q=(\sin\theta\cos\phi,\sin\theta\sin\phi,\cos\theta)\in \mathbb{R}^3$.
The Lagrangian for the three body problem on $\mathbb{S}^2$
is given by
\begin{equation}\label{theLagrangian}
L = K - V,
\end{equation}
where the kinetic energy $K$ and the cotangent potential $V$ are 
\begin{equation*}
K=\sum\nolimits_k\frac{m_k}{2}
		\left(\dot{\theta}_k^2+\sin^2(\theta_k)\dot{\phi}_k^2\right),
\quad 
V= - \sum\nolimits_{i<j}
	\frac{m_i m_j\cos\sigma_{ij}}{\sqrt{1-\cos^2(\sigma_{ij})}}.
\end{equation*}
Dot on symbols represents the time derivative,
the indexes $i,j,k$ run for $1,2,3$,
and $m_k$ are the masses.
The angle $\sigma_{ij}$
is the angle between the two points $q_i$ and $q_j$
as seen from the centre of $\mathbb{S}^2$. 
In order to avoid the singularities \cite{Diacu-EPC2}, the range of 
$\sigma_{ij}$ is restricted to $0< \sigma_{ij}< \pi \,\,
\mbox{ for all } \,\, i\ne j.$ Then
$\cos\sigma_{ij}$ is given by the inner product of $q_i$ and $q_j$,
namely
\begin{equation}
\label{fundamentalrelation}
\cos\sigma_{ij}
=\cos\theta_i\cos\theta_j+\sin\theta_i\sin\theta_j\cos(\phi_i-\phi_j).
\end{equation}

The equations of motion derived from Lagrangian \eqref{theLagrangian} are
\begin{equation}
\begin{split}
&\frac{d}{dt}\left(m_i \dot\theta_i\right)
=m_i \sin\theta_i\cos\theta_i \dot\phi_i^2
	- \frac{\partial V}{\partial \theta_i},\\
&\frac{d}{dt}\left(m_i\sin^2(\theta_i)\dot\phi_i\right)
	= - \frac{\partial V}{\partial \phi_i}.
\end{split}
\end{equation}

\begin{definition}\label{def:RE}
A relative equilibrium ($RE$ in short)
on $\mathbb{S}^2$
is a solution of the equations of motion
with $\dot\theta_i=0$ and $\dot\phi_i=\omega=$ constant.
\end{definition}

Then, the equations of motion for a relative equilibrium
are reduced to
\begin{equation}\label{eqOfMotForTheta}
\begin{split}
\omega^2 m_i \sin\theta_i\cos\theta_i
=\sum_{j\ne i}\frac{m_i m_j}{\sin^3(\sigma_{ij})}
	\Big(
		\sin\theta_i\cos\theta_j-\cos\theta_i\sin\theta_j\cos(\phi_i-\phi_j)
	\Big)
\end{split}
\end{equation}
and
\begin{equation}\label{eqforphi}
\begin{split}
&m_1 m_2\sin\theta_1 \sin\theta_2 \sin(\phi_1-\phi_2)/\sin^3(\sigma_{12})\\
=&m_2 m_3\sin\theta_2 \sin\theta_3 \sin(\phi_2-\phi_3)/\sin^3(\sigma_{23})\\
=&m_3 m_1\sin\theta_3 \sin\theta_1 \sin(\phi_3-\phi_1)/\sin^3(\sigma_{31}).
\end{split}
\end{equation}
We call them as ``equations of relative equilibria'' in the following.

\begin{remark}
One might be interested in $RE$ with $\theta_1=\theta_2=\theta_3$.
The three bodies move along the same circle, which is parallel to the equator. This group of $RE$ must be the simplest $RE$.
In the corresponding Newtonian problem,
$RE$ with $r_1=r_2=r_3$ are realised only when $m_1=m_2=m_3$
(and the shape is equilateral), 
they form a ``choreography''.

However, Diacu and Zhu  showed that
besides the equilateral $RE$ with $m_1=m_2=m_3$ (which form a ``choreography''),
a nontrivial group of isosceles $RE$ exist on $\mathbb{S}^2$: for $m_2=m_3$ and $m_1=m_3\nu$ with $\nu\in (0,2)$ \cite{zhu2}.
The latter solution is not ``choreography'',
because ``choreography'' requires ``equal time spacing on the orbit'' between the bodies. The latter solutions have different time spacing if $\nu\ne 1$.
This is an interesting example for
motions along a same orbit with different time spacing between bodies.
Examples of non-circular choreographies on $\mathbb{S}^2$
were studied by  Montanelli\cite{Montanelli}.
\end{remark}

\begin{definition}[Euler and Lagrange]\label{EulerAndLagrange}
An Euler $RE$ ($ERE$) is an $RE$ where three bodies are on the same geodesic. 
If this is not the case, we call it Lagrange $RE$ ($LRE$).
\end{definition}

In \cite{F-P}, we showed that there are two cases of $ERE$:
three bodies are on a rotating meridian, or they are on the equator. Obviously for $ERE$ the rotation axis is the $z$-axis.

\begin{definition}[Shape and configuration]\label{shape-conf}
A shape is the set $\{\sigma_{12},\sigma_{23},\sigma_{31}\}$,
and a configuration is the set 
$\{\theta_1,\theta_2,\theta_3,\phi_1-\phi_2,\phi_2-\phi_3,\phi_3-\phi_1\}$.
\end{definition}

In the following,
we simply write
$\{\sigma_{ij}\}$ and
$\{\theta_k, \phi_i-\phi_j\}$
for the above sets.
Similarly, we write
$\{m_k\}$ for $\{m_1,m_2,m_3\}$.

In 1772 \cite{L}, J.L. Lagrange published an amazing result for the Newtonian problem of the three bodies: {\it Any three arbitrary masses located at the vertices of an equilateral triangle, generates a relative equilibria}. In other words, if we put any three different masses, as for instance the Sun, Jupiter and a small stone, at the vertices of an equilateral triangle, then there exists an angular velocity $\omega$ such that the three masses rotate uniformly around their center of mass, the motion is like a rigid body. 
This is that we call {\it mass independent shape for $RE$}. In a natural way we extend this concept to the sphere, and we rise the question: Are there mass independent shape $RE$ for the three body problem on $\mathbb{S}^2$?

It is easy to show that mass independent shape for relative equilibria of the $2$--body problem on the sphere, are all shapes $\sigma_{12} \in (0,\pi)$ except for $\sigma_{12} = \pi/2$. Ahead  in this paper we will prove this statement (see Proposition~ \ref{2bodyMassInd}).

In \cite{Diacu-EPC1}, Diacu et al.~showed
that in order to generate a $RE$ for three masses 
located at the vertices  of
an equilateral triangle parallel to the equator, the masses should be equal. In \cite{zhu}, 
Zhu proved that for the cotangent potential, any three masses 
placed on the equilateral triangle
on a rotating meridian generate a $RE$. Some years later in \cite{F-P}, we extend this result for general potentials which only depends of mutual distances among the masses. The goal of this paper is to show that, for the cotangent potential, besides to the equilateral triangle, we have one additional isosceles triangle shape on a rotating meridian, which is independent of the choice of the masses. We believe that this work will open a door for the search of new $RE$ on curved spaces and the stability of them, as well as its possible applications.

In Definition \ref{shape-conf}, 
we emphasised the difference between shape and configuration. 
In order to be more accurate, we close this section by giving the precise definitions of the concepts that we will use in this article.

\begin{definition}
A $RE$ shape for the two dimensional constant positive curved three body problem, is a shape which can form a $RE$. In particular we call Lagrange RE shape (Lagrange shape in short) and Euler $RE$ shape (Euler shape in short) to the shapes which can form $LRE$ and $ERE$ respectively.
\end{definition}
\begin{definition}[Mass independent shape for $RE$]
A mass independent $RE$ shape
is a shape $\{\sigma_{ij}\}$ that can form 
an $RE$ for any masses $\{m_k\}$.
\end{definition}

After the introduction, where we define the concepts 
that we will study here, the paper is organized as follows: in Section \ref{prelims}, we state the equations used to generate $LRE$ and $ERE$.
We prove that there are no mass independent Lagrange shapes,
and there are no mass independent Euler shapes on the equator. 
In Section \ref{main}, we prove
the main result of this article: the existence of mass independent 
Euler shape on a rotating meridian. Since the rotation axis
depends on masses, the configuration made from the mass independent shape depends on masses (see \cite{F-P} for details). At the end of this section we briefly discuss the case of the restricted three body problem on the sphere.

In Section \ref{ind-conf}, we show the configurations for different choice of the masses.
In Section \ref{cont}, we look for continuations of $RE$ shape from  
mass independent Euler shapes. 
We show that each one of them can be continued as 
a $RE$ shape which is mass dependent.
In Section \ref{Isos-eq},
we prove that isosceles or equilateral 
$RE$ shapes when 
all masses are different from each other,
are just the mass independent
shapes shown in the previous sections.
We finish the paper with an Appendix
to describe the properties of some special shapes.

\section{Preliminars}\label{prelims}
We have proved in \cite{F-P}, that the $RE$ on the sphere are solutions of the eigenvalue problem
\begin{equation}\label{eigen-pbm}
J\Psi = \lambda \Psi
\end{equation}
where the eigenvector $\Psi$ represents the direction of the rotation axis for 
a $LRE$ 
and the matrix 
$J$ is an equivalent expression of the inertia tensor given by 

\begin{equation}\label{defJ}
J=\left(\begin{array}{ccc}
m_2+m_3 & -\sqrt{m_1m_2}\cos\sigma_{12} & -\sqrt{m_1m_3}\cos\sigma_{13} \\
-\sqrt{m_2m_1}\cos\sigma_{21} &m_3+m_1& -\sqrt{m_2m_3}\cos\sigma_{23} \\
-\sqrt{m_3m_1}\cos\sigma_{31} & -\sqrt{m_3m_2}\cos\sigma_{32} & m_1+m_2
\end{array}\right).
\end{equation}

\subsection{$LRE$}

In the same paper \cite{F-P}, we prove that, for the positive curved three body problem, 
the necessary and sufficient condition for a shape to generate a $LRE$ is
$\lambda_1=\lambda_2=\lambda_3$.
Where the $\lambda_i$'s are
\begin{equation}\label{eqForLagrangeII}
\begin{split}
\lambda_1&=\frac{(m_2+m_3)\sin^3(\sigma_{23})
-m_2\cos(\sigma_{12})\sin^3(\sigma_{31})
-m_3\cos(\sigma_{31})\sin^3(\sigma_{12})}{\sin^3(\sigma_{23})},\\
\lambda_2&=\frac{(m_3+m_1)\sin^3(\sigma_{31})
-m_3\cos(\sigma_{23})\sin^3(\sigma_{12})
-m_1\cos(\sigma_{12})\sin^3(\sigma_{23})}{\sin^3(\sigma_{31})},\\
\lambda_3&=\frac{(m_1+m_2)\sin^3(\sigma_{12})
-m_1\cos(\sigma_{31})\sin^3(\sigma_{23})
-m_2\cos(\sigma_{23})\sin^3(\sigma_{31})}{\sin^3(\sigma_{12})}.
\end{split}
\end{equation}

\begin{proposition}
There are no mass independent Lagrange shapes.
\end{proposition}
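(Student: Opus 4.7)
The plan is to exploit the fact that, reading off \eqref{eqForLagrangeII}, each eigenvalue $\lambda_i$ is an \emph{affine} function of $(m_1,m_2,m_3)$ whose coefficients depend only on the shape $\{\sigma_{ij}\}$. Hence the hypothesis that a fixed shape produces $\lambda_1=\lambda_2=\lambda_3$ for \emph{every} triple of masses is equivalent to the vanishing of the coefficients of each $m_k$ in the differences $\lambda_i-\lambda_j$, viewed as polynomials in the $m_k$. I would only need to show that these coefficient conditions are already inconsistent for the single pair $\lambda_1=\lambda_2$.

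First I would rewrite
\begin{equation*}
\lambda_1=(m_2+m_3)-m_2\,\frac{\cos(\sigma_{12})\sin^3(\sigma_{31})}{\sin^3(\sigma_{23})}-m_3\,\frac{\cos(\sigma_{31})\sin^3(\sigma_{12})}{\sin^3(\sigma_{23})},
\end{equation*}
\begin{equation*}
\lambda_2=(m_3+m_1)-m_3\,\frac{\cos(\sigma_{23})\sin^3(\sigma_{12})}{\sin^3(\sigma_{31})}-m_1\,\frac{\cos(\sigma_{12})\sin^3(\sigma_{23})}{\sin^3(\sigma_{31})}.
\end{equation*}
The coefficient of $m_1$ in $\lambda_1-\lambda_2$ vanishes iff $\cos(\sigma_{12})\sin^3(\sigma_{23})=\sin^3(\sigma_{31})$, and the coefficient of $m_2$ vanishes iff $\cos(\sigma_{12})\sin^3(\sigma_{31})=\sin^3(\sigma_{23})$. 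Multiplying these two identities, and using that $\sin(\sigma_{ij})>0$ on the admissible range $0<\sigma_{ij}<\pi$ so that the common factor $\sin^3(\sigma_{23})\sin^3(\sigma_{31})$ cancels, I would obtain $\cos^2(\sigma_{12})=1$, contradicting $\sigma_{12}\in(0,\pi)$.

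I do not expect any genuine obstacle; the argument is a one-line reduction to a polynomial identity in the masses followed by elementary algebra. The only thing to be careful about is that the admissibility condition $0<\sigma_{ij}<\pi$ is precisely what allows both the cancellation of the positive sines and the exclusion of $\cos(\sigma_{12})=\pm 1$, so the positivity needs to be invoked explicitly. The analogous coefficient extraction from $\lambda_2-\lambda_3$ or $\lambda_3-\lambda_1$ would of course give the same type of contradiction by symmetry, but is not needed.
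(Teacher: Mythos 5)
Your argument is correct and is essentially identical to the paper's own proof: the paper also observes that $\lambda_1-\lambda_2$ is linear in the masses, extracts the coefficients of $m_1$ and $m_2$ (its $S_{11}$ and $S_{12}$), and derives the same pair of conditions $\cos\sigma_{12}\,\sin^3(\sigma_{23})/\sin^3(\sigma_{31})=\cos\sigma_{12}\,\sin^3(\sigma_{31})/\sin^3(\sigma_{23})=1$, which is unsolvable for $\sigma_{ij}\in(0,\pi)$ exactly because their product forces $\cos^2\sigma_{12}=1$. No meaningful difference in approach.
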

\begin{proof}
The equation $\lambda_1-\lambda_2=\lambda_2-\lambda_3=0$ has the form
\begin{equation}
\left(\begin{array}{ccc}
S_{11} & S_{12} & S_{13} \\
S_{21} & S_{22} & S_{23}
\end{array}\right)
\left(\begin{array}{c}m_1 \\ m_2 \\ m_3\end{array}\right)
=0.
\end{equation}
Where, each $S_{ij}$ is a function of 
$\{\sigma_{ij}\}$.
To satisfy the above equation for any $\{m_k\}$,
all the elements $S_{ij}$ must be $0$.
But, $S_{11}=S_{12}=0$ yields
\begin{equation*}
\cos\sigma_{12}\,\frac{\sin^3(\sigma_{23})}{\sin^3(\sigma_{31})}
=\cos\sigma_{12}\,\frac{\sin^3(\sigma_{31})}{\sin^3(\sigma_{23})}
=1.
\end{equation*}
Which does not have solution for $\sigma_{ij}\in (0,\pi)$.

Therefore, there are no mass independent shapes for $LRE$.
\end{proof}

\subsection{$ERE$ on the equator}
When three bodies are on the equator, $\theta_i=\pi/2$,
and $\sin\sigma_{ij}=|\sin(\phi_i-\phi_j)|$.
Therefore the equation of motion \eqref{eqOfMotForTheta} is automatically satisfied,
and \eqref{eqforphi} takes the form
\begin{equation}\label{eqForEREOntheEquator}
\frac{m_1m_2\sin(\phi_1-\phi_2)}{|\sin(\phi_1-\phi_2)|^3}
=\frac{m_2m_3\sin(\phi_2-\phi_3)}{|\sin(\phi_2-\phi_3)|^3}
=\frac{m_3m_1\sin(\phi_3-\phi_1)}{|\sin(\phi_3-\phi_1)|^3}.
\end{equation}

\begin{proposition}
There are no mass independent Euler shapes on the equator.
\end{proposition}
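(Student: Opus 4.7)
The plan is to reduce equation \eqref{eqForEREOntheEquator} to explicit constraints on the masses $\{m_k\}$ that depend only on the fixed shape $\{\sigma_{ij}\}$, and then show these constraints always pin the mass ratios rather than leaving them free.

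First I would exploit $\sin\sigma_{ij}=|\sin(\phi_i-\phi_j)|$ on the equator to rewrite
\[
\frac{\sin(\phi_i-\phi_j)}{|\sin(\phi_i-\phi_j)|^3}=\frac{\epsilon_{ij}}{\sin^2\sigma_{ij}},\qquad
\epsilon_{ij}:=\mathrm{sgn}\bigl(\sin(\phi_i-\phi_j)\bigr)\in\{\pm1\},
\]
where each $\epsilon_{ij}$ is a combinatorial invariant of the cyclic arrangement of $\phi_1,\phi_2,\phi_3$ on the equator. Then \eqref{eqForEREOntheEquator} takes the compact form
\[
\frac{m_1 m_2\,\epsilon_{12}}{\sin^2\sigma_{12}}
=\frac{m_2 m_3\,\epsilon_{23}}{\sin^2\sigma_{23}}
=\frac{m_3 m_1\,\epsilon_{31}}{\sin^2\sigma_{31}}.
\]

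Next, since $m_k>0$ and $\sin^2\sigma_{ij}>0$, these three expressions can share a common value only when $\epsilon_{12}=\epsilon_{23}=\epsilon_{31}$; otherwise no positive mass assignment works. When the signs do agree, cancelling common mass factors yields the two identities
\[
\frac{m_1}{m_3}=\frac{\sin^2\sigma_{12}}{\sin^2\sigma_{23}},\qquad
\frac{m_2}{m_1}=\frac{\sin^2\sigma_{23}}{\sin^2\sigma_{31}},
\]
which determine the ratio $m_1:m_2:m_3$ uniquely. Hence, for any fixed shape on the equator, the admissible masses form at most a single ray in the positive cone $\mathbb{R}_{>0}^3$, never the full cone; consequently no mass independent Euler shape can exist on the equator.

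The one point requiring care, which I expect to be the main (modest) bookkeeping hurdle, is that the same shape $\{\sigma_{ij}\}$ can be realised by several distinct configurations having different sign patterns $(\epsilon_{12},\epsilon_{23},\epsilon_{31})$. I would handle this by verifying that each such pattern either is incompatible with the common-sign requirement (infeasible for positive masses) or produces a fixed ratio of the type displayed above; the union over all configurations of one shape is still a finite collection of rays, which cannot exhaust the three-parameter family of positive masses demanded by mass independence.
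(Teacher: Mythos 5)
Your proposal is correct and follows the only natural route: the paper itself dismisses this proposition with the single word ``Obviously,'' so your argument is simply the honest elaboration of what the authors leave unstated. Your rewriting of \eqref{eqForEREOntheEquator} as $m_im_j\epsilon_{ij}/\sin^2\sigma_{ij}$, the observation that the signs $\epsilon_{ij}$ must agree, and the conclusion that the two resulting identities pin the ratio $m_1:m_2:m_3$ to a single ray (and, with your final bookkeeping over the finitely many configurations realising one shape, to a finite union of rays) is a complete and correct proof --- indeed more careful than the paper's.
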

\begin{proof}
Obviously, there are no mass independent solution of $\{\phi_i-\phi_j\}$.
\end{proof}

\subsection{$ERE$ on a rotating meridian}
For $ERE$ on a rotating meridian,
it is convenient to extend the range of $\theta_i$ to $-\pi\le \theta_i\le \pi$
and $\phi_i=0$.
Then 
$\sin\sigma_{ij}=|\sin(\theta_i-\theta_j)|$,
and the equations of relative equilibria \eqref{eqOfMotForTheta} are
\begin{equation}\label{eqOfMotForTheta2}
\omega^2 m_i \sin\theta_i\cos\theta_i
=\sum_{j\ne i}\frac{m_i m_j\sin(\theta_i-\theta_j)}{|\sin(\theta_i-\theta_j)|^3}.
\end{equation}

The equations of relative equilibria  \eqref{eqforphi} are automatically satisfied.

We have proved in \cite{F-P} that if $A$ defined by
\begin{equation}\label{defOfA}
A=\left(\sum_\ell m_\ell^2+2\sum_{i<j}m_im_j\cos(2(\theta_i-\theta_j))
	\right)^{1/2}
\end{equation}
is not zero, then
the necessary and sufficient condition for the shape
$\{\theta_i-\theta_j\}$
to be an Euler shape on a rotating meridian is
\begin{equation}
\label{eqThetaForMeridianForCotangent}
\begin{split}
&m_1m_2\left(\frac{s\omega^2}{2A}\sin\Big(2(\theta_1-\theta_2)\Big)
- \frac{\sin(\theta_1-\theta_2)}{|\sin(\theta_1-\theta_2)|^3}
\right)\\
=&m_2m_3\left(\frac{s\omega^2}{2A}\sin\Big(2(\theta_2-\theta_3)\Big)
- \frac{\sin(\theta_2-\theta_3)}{|\sin(\theta_2-\theta_3)|^3}
\right)\\
=&m_3m_1\left(\frac{s\omega^2}{2A}\sin\Big(2(\theta_3-\theta_1)\Big)
- \frac{\sin(\theta_3-\theta_1)}{|\sin(\theta_3-\theta_1)|^3}
\right),
\end{split}
\end{equation}
for $s=1$ or $-1$.
\begin{remark}
The parameter $s=\pm 1$ comes from the fact that, when we study $RE$ on a rotating meridian, it is convenient to enlarge the range angle $\theta_k$ to $- \pi <  \theta_k < \pi$, with $\phi_k=0$. When we study a particular configuration, the equations of relative equilibria \eqref{eqOfMotForTheta} and \eqref{eqforphi} determine the sign of $s$ (see \cite{F-P} for more details).
\end{remark}

The configuration $\{\theta_k\}$ is given by
\begin{equation}\label{translationFormula}
\begin{split}
\cos(2\theta_1)&=s A^{-1}\left(
	m_1+m_2\cos\Big(2(\theta_1-\theta_2)\Big)
				+m_3\cos\Big(2(\theta_1-\theta_3)\Big)
	\right),\\
\sin(2\theta_1)&=s A^{-1}\left(
	m_2\sin\Big(2(\theta_1-\theta_2)\Big)
				+m_3\sin\Big(2(\theta_1-\theta_3)\Big)
	\right).
\end{split}
\end{equation}
The other angles $\theta_k$ are determined by 
$\theta_k=\theta_1+(\theta_k-\theta_1)$.

For the special shapes with $A=0$,
the map from the shape to configuration,
$\{\theta_i-\theta_j\}\to \{\theta_k\}$,
is not determined uniquely.
Therefore, we have to check
whether each of such shapes can satisfy the equation of motion or not.
See Appendix.

In the next section, we will show 
the existence of mass independent Euler shapes on a rotating meridian.

\section{Mass independent shapes for $ERE$ on a rotating meridian}\label{main}
To get a mass independent shape,
any term
in the parentheses of equation
\eqref{eqThetaForMeridianForCotangent}
must be zero, namely,
\begin{equation}\label{eqForMassIndependentShape0}
\frac{s\omega^2}{2A}\sin\Big(2(\theta_i-\theta_j)\Big)
=\frac{\sin(\theta_i-\theta_j)}{|\sin(\theta_i-\theta_j)|^3}
\end{equation}
for $(i,j)=(1,2), (2,3), (3,1)$.

From now on, 
in order to facilitate the reading of the manuscript
we introduce the new variables
$\tau_k=\theta_i-\theta_j$
for $(i,j,k)=(1,2,3), (2,3,1)$, and $(3,1,2)$.
The range of $\tau_k$ is $(-\pi,\pi)$.
The relation between $\sigma_{ij}$ and $\tau_k$ is $\sigma_{ij}=|\tau_k|$ and
$\sin\sigma_{ij}=|\sin\tau_k|$.
Equation \eqref{eqForMassIndependentShape0}
in $\tau_k$ variables is,
\begin{equation}\label{eqForMassIndependentShape}
\frac{s\omega^2}{2A}\sin(2\tau_k)
=\frac{\sin\tau_k}{|\sin\tau_k|^3}.
\end{equation}

Since 
$\tau_3=\theta_1-\theta_2=-(\tau_1+\tau_2)$,
we can take $\tau_1$ and $\tau_2$ as
the independent variable to give a shape.
To avoid the singularity, $\sin\tau_k\ne 0$.
We can restrict the range of $\tau_2\in (0,\pi)$,
because 
we can rotate the system by $\pi$
around the north pole if $\tau_2<0$.
Therefore, the non-singular shapes $\{\sigma_{ij}\}$ and 
the ordered set $(\tau_1,\tau_2)$
are in correspondence one to one in the set
\begin{equation}\label{defOfUphys}
U_\textrm{phys}
=\{(\tau_1,\tau_2)|
\tau_1\in (-\pi,\pi), 
\tau_2\in (0,\pi),
\sin(\tau_1)\sin(\tau_2)\sin(\tau_1+\tau_2)\ne 0\}.
\end{equation}
This is the shape space
for $RE$ on a rotating meridian.

Since $\sin\tau_k\ne 0$, the equations \eqref{eqForMassIndependentShape}  are equivalent to
\begin{equation}
\frac{s\omega^2}{A}\cos\tau_k
=\frac{1}{|\sin\tau_k|^3}.
\end{equation}
Given that
$\cos\tau_k=0$ cannot satisfy this equation,
we assume $\cos\tau_k\ne 0$.
So, the above conditions are
\begin{equation}\label{condition}
\frac{s\omega^2}{A}
=\frac{1}{\cos(\tau_k)|\sin\tau_k|^3}.
\end{equation}
The equations for $\tau_k$ are
\begin{equation}
\cos(\tau_1)|\sin\tau_1|^3
=\cos(\tau_2)|\sin\tau_2|^3
=\cos(\tau_3)|\sin\tau_3|^3,
\end{equation}
or in variables $\tau_1,\tau_2$
\begin{equation}\label{eqRot-0}
\cos(\tau_1)|\sin(\tau_1)|^3
=\cos(\tau_2)|\sin(\tau_2)|^3
=\cos(\tau_1+\tau_2)|\sin(\tau_1+\tau_2)|^3.
\end{equation}

In order to facilitate the reading of our main theorem, we define the following curves (see Figure \ref{figThm1}) 
$$f_1(\tau_1, \tau_2) = \cos(\tau_1)|\sin(\tau_1)|^3 - \cos(\tau_2)|\sin(\tau_2)|^3,$$  $$f_2(\tau_1, \tau_2) = \cos(\tau_1)|\sin(\tau_1)|^3 - \cos(\tau_1+\tau_2)|\sin(\tau_1+\tau_2)|^3.$$
\begin{figure}
   \centering
   \includegraphics[width=7cm]{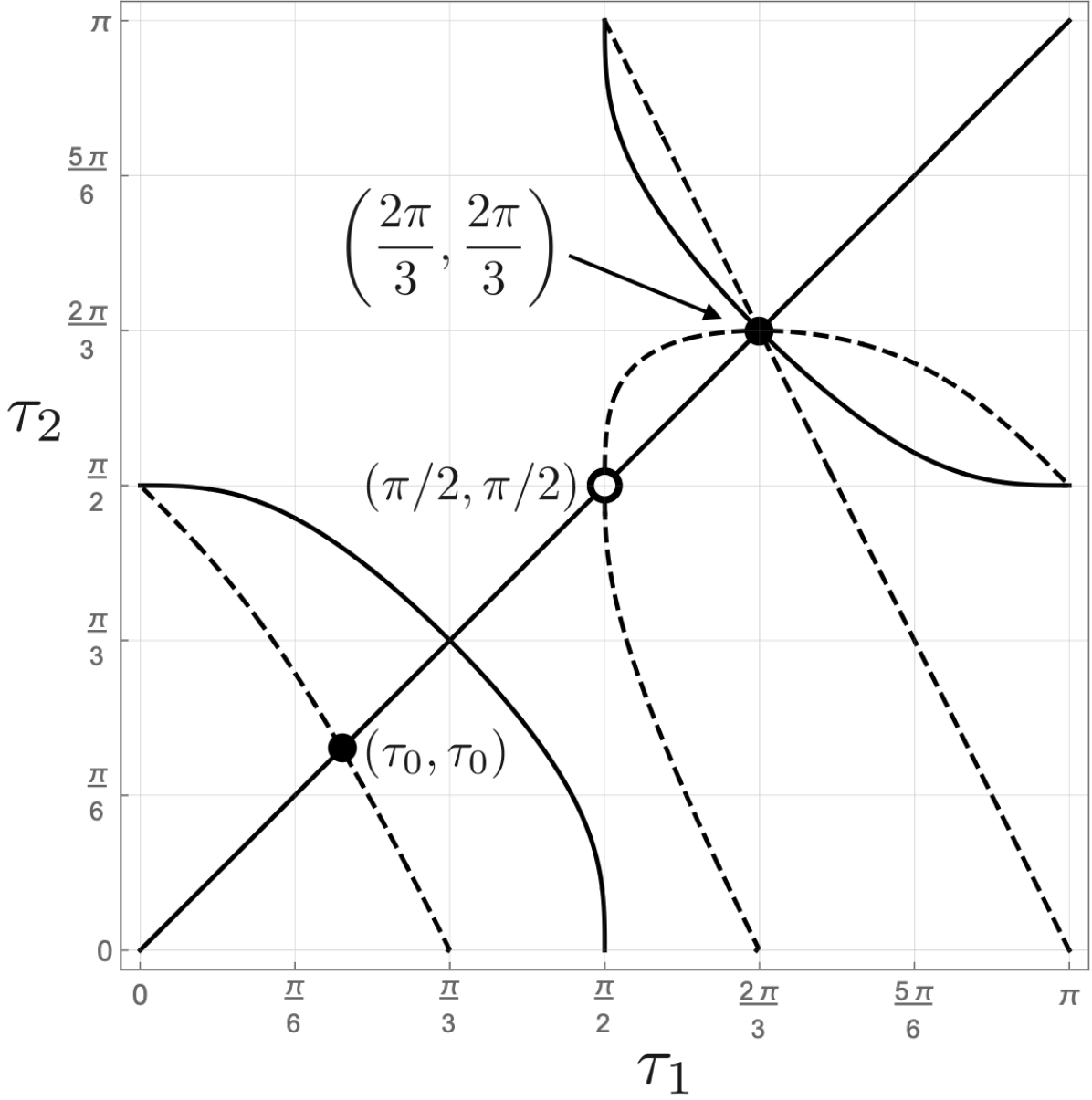}%
   \caption{The solid curves and the dashed curves represent
$f_1(\tau_1, \tau_2)=0$ and $f_2(\tau_1, \tau_2)=0$
respectively, the intersection points of these curves give us the possible mass independent shapes for $ERE$.
}
  \label{figThm1}
\end{figure}

Now, we are in conditions to state and prove the main 
result of this article.

\begin{theorem}\label{thm}
In the two dimensional constant positive curved three body problem, there are exactly two $RE$ shapes which are independent of the masses, 
both of them are on a rotating meridian (Euler shapes), one isosceles triangle with equal arc $\tau_0=2^{-1}\arccos((\sqrt{2}-1)/2)$ and one equilateral triangle with the same arc 
$\tau_\textrm{e}=2\pi/3$.
\end{theorem}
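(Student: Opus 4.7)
The proof strategy is to first invoke the two preceding propositions to restrict attention to Euler shapes on a rotating meridian, then to extract from equation~\eqref{eqThetaForMeridianForCotangent} the mass--independent constraint, and finally to solve that constraint as a one--variable problem. Explicitly, for a shape to form an $RE$ for \emph{all} masses each parenthesis in~\eqref{eqThetaForMeridianForCotangent} must vanish separately; this is \eqref{eqForMassIndependentShape0}, and via~\eqref{condition} it rewrites as
\begin{equation*}
g(\tau_{1}) = g(\tau_{2}) = g(\tau_{3}), \qquad g(\tau) := \cos\tau\,|\sin\tau|^{3}, \qquad \tau_{3} = -(\tau_{1} + \tau_{2}),
\end{equation*}
with the common value of $g(\tau_{k})$ having a fixed sign so that $s=\pm 1$ and $\omega^{2}>0$ are consistent.

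First, one records the shape of $g$: it is even, $2\pi$--periodic, vanishes at $0,\pi/2,\pi$ on $[0,\pi]$, and from $g'(\tau) = \sin^{2}\tau\,(4\cos^{2}\tau - 1)$ attains its maximum $3\sqrt{3}/16$ at $\tau = \pi/3$ and its minimum $-3\sqrt{3}/16$ at $\tau = 2\pi/3$. Consequently each level set $\{g = c\}$ with $c \ne 0$ has exactly two preimages $a^{-} < a^{+}$ in $(0,\pi)$, lying in $(0,\pi/2)$ when $c>0$ and in $(\pi/2,\pi)$ when $c<0$. Writing $a_{k} = |\tau_{k}|$ and $\epsilon_{k} = \operatorname{sign}(\tau_{k})$, the constraint $\tau_{1}+\tau_{2}+\tau_{3} = 0$, after the natural reduction modulo $2\pi$, becomes $\sum_{k} \epsilon_{k} a_{k} \in \{-2\pi, 0, 2\pi\}$. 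The next step is to split according to the sign of the common value $c$.

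If $c > 0$ then every $a_{k} \in (0,\pi/2)$, so $|\sum\epsilon_{k}a_{k}| < 3\pi/2$ and the sum must be $0$. The equilateral configuration $a_{1}=a_{2}=a_{3}$ is excluded because $\sum\epsilon_{k}$ is odd; hence two of the $a_{k}$ equal $a^{-}$ and one equals $a^{+}$ (the opposite split would force $a^{-} = 2a^{+} > a^{+}$, a contradiction). The sign balance then forces $a^{+} = 2a^{-}$, and substituting this into $g(a^{-}) = g(2a^{-})$ together with $\sin(2a^{-}) = 2\sin a^{-}\cos a^{-}$ and $\cos^{2}a^{-} = (1 + \cos 2a^{-})/2$ reduces the equation to $4\cos^{2}(2a^{-}) + 4\cos(2a^{-}) - 1 = 0$, whose only root in $(-1,1)$ is $\cos(2a^{-}) = (\sqrt{2}-1)/2$. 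This yields the isosceles shape with equal arc $\tau_{0} = \tfrac{1}{2}\arccos((\sqrt{2}-1)/2)$.

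If $c < 0$ then every $a_{k} \in (\pi/2, \pi)$. The option $\sum\epsilon_{k}a_{k} = 0$ would require, after relabelling, $a_{i} + a_{j} = a_{\ell} < \pi$, contradicting $a_{i}+a_{j} > \pi$, while $\sum\epsilon_{k}a_{k} = \pm 2\pi$ forces all three $\epsilon_{k}$ to agree in sign, hence $a_{1}+a_{2}+a_{3} = 2\pi$. The all--equal subcase gives directly $a_{k} = 2\pi/3$, the equilateral $\tau_{\textrm{e}} = 2\pi/3$; the two mixed subcases impose $2a^{-}+a^{+} = 2\pi$ or $2a^{+}+a^{-} = 2\pi$ together with $g(a^{-}) = g(a^{+})$, and after the same substitutions the resulting equation collapses to the perfect square $(2\cos 2a + 1)^{2} = 0$, forcing $a = 2\pi/3$ and returning to the equilateral. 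A final verification that both shapes yield genuine $RE$ for every choice of masses (using \eqref{translationFormula} when $A\neq 0$ and the appendix when $A=0$) completes the argument. The main difficulty is the careful bookkeeping of the modular constraint $\tau_{1}+\tau_{2}+\tau_{3} \equiv 0 \pmod{2\pi}$: it is exactly what allows the equilateral shape (whose arcs sum to $2\pi$, not $0$) to appear at all, and it dictates the sign split that drives the case analysis.
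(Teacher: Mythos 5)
Your proposal is correct, and while the reduction to the mass--independent condition $\cos\tau_1|\sin\tau_1|^3=\cos\tau_2|\sin\tau_2|^3=\cos\tau_3|\sin\tau_3|^3$ coincides with the paper's (equation \eqref{eqRot-0}), your resolution of that system is genuinely different from, and cleaner than, the published one. The paper first treats the isosceles ansatz $\tau_1=\tau_2$ (Steps 1--2, yielding the quadratics $4\cos(2\tau)(\cos(2\tau)+1)=\pm 1$) and then devotes a long Step 3 to excluding scalene shapes: it splits the shape space into four sign regions and, in each, converts $\eqref{eqRot-0}$ into equations in $a+b$ and $a-b$ via product--to--sum identities, substitutes, and checks that the resulting polynomial in $\cos(2(a+b))$ has no admissible root. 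You instead study the single--variable function $g(\tau)=\cos\tau|\sin\tau|^3$, observe from $g'(\tau)=\sin^2\tau\,(4\cos^2\tau-1)$ that each nonzero level set meets $(0,\pi)$ in \emph{at most} two points (you should say ``at most'' rather than ``exactly'', since the critical values $\pm 3\sqrt{3}/16$ have a single preimage, but that degenerate case folds into your all--equal subcase), so that scalene shapes are excluded automatically, and then let the modular constraint $\sum_k\epsilon_k a_k\in\{0,\pm 2\pi\}$ dictate the relation $a^+=2a^-$ (for $c>0$) or $a_1+a_2+a_3=2\pi$ (for $c<0$); substituting these relations reproduces exactly the paper's two quadratics. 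What your route buys is a conceptual explanation of why only isosceles and equilateral shapes can occur (a two--point level set forces two equal arcs) and a single unified case split on the sign of $c$ in place of the paper's four regions; what the paper's route buys is that it never needs the modular bookkeeping of $\tau_1+\tau_2+\tau_3\equiv 0 \pmod{2\pi}$ explicitly, which you correctly identify as the delicate point of your argument. Your deferral of the final verification (that $A>0$ for the isosceles shape, and that the equal--mass equilateral case $A=0$ is saved by $\omega=0$) is acceptable but should be carried out as in the paper's Steps 1--2; likewise your opening assertion that each parenthesis of \eqref{eqThetaForMeridianForCotangent} must vanish separately is stated at the same level of justification as in the paper.
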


\begin{proof}
In the previous section we showed the no existence of
mass independent Lagrange shapes nor Euler shapes on the equator.
Then, it is only necessary to analyze the Euler shapes on a rotating meridian.

For an isosceles triangle on a rotating meridian,
we consider the case
$\tau=\tau_1=\tau_2 \in (0,\pi)\setminus\{\pi/2\}$.
Then, equation \eqref{eqRot-0}
takes the form
\begin{equation}
\cos(\tau)|\sin\tau|^3
=\cos(2\tau)|\sin(2\tau)|^3.
\end{equation}
Therefore, $\cos\tau$ and $\cos(2\tau)$ must have the same sign.
Namely, $0<\tau<\pi/4$ (both are positive)
or $\pi/2<\tau<3\pi/4$ (both are negative).

We divide the proof in three steps depending of the different shape of the triangle, isosceles, equilateral or scalene.

\subsubsection*{Step 1: For $0<\tau<\pi/4$ we obtain an isosceles Euler shape.}
\begin{proof}
For the case $0<\tau<\pi/4$, $\sin\tau$ and $\sin(2\tau)$ are positive. Then, the equation for $2\tau$ is
\begin{equation}
4\cos(2\tau)(\cos(2\tau)+1)=1,
\end{equation}
a second order polynomial in $\cos(2\tau)$ whose
solution is $\cos(2\tau_0)=(\sqrt{2}-1)/2$ corresponding to
$\tau_0=2^{-1}\arccos((\sqrt{2}-1)/2)=0.6810...<\pi/4$.

For this solution,
\begin{equation}\label{AforIsosceles}
A^2 =(m_1-m_2)^2+\left(3-2\sqrt{2}\right)m_1m_2
	+m_3^2+\left(\sqrt{2}-1\right)(m_2m_3+m_3m_1)
>0.
\end{equation}
Therefore, we get one isosceles solution.
\end{proof}

Observe that we didn't use any special properties for 
$\{m_k\}$ in this calculation. Any mass $m_k$ can be located in middle of the other two masses $m_i$ and $m_j$.


In variables $(\tau_1,\tau_2)$, the three shapes for the isosceles $RE$ (which we are counting just as one) are
$(\tau_0,\tau_0)$, 
$(-2\tau_0,\tau_0)$, and
$(-\tau_0,2\tau_0)$.

\subsubsection*{Step 2: For $\pi/2<\tau<3\pi/4$ we obtain a unique equilateral Euler shape.}
\begin{proof}
In this case, 
the sign of $\sin\tau$ and $\sin(2\tau)$ are opposite.
Therefore the equation for $\cos(2\tau)$
is reduced to
\begin{equation}
4\cos(2\tau)(\cos(2\tau)+1)=-1.
\end{equation}
The solution in $\pi/2<\tau<3\pi/4$
is
$\cos(2\tau)=-1/2$ corresponding to 
$\tau=2\pi/3$.
Therefore,
$\tau=\theta_2-\theta_3=\theta_3-\theta_1=2\pi/3$,
and $\theta_1-\theta_2=-2\tau=-4\pi/3 \equiv 2\pi/3 \mod 2\pi$.
Namely, this is an equilateral triangle.

For this solution,
\begin{equation}
A^2=\sum_{i<j}(m_i-m_j)^2.
\end{equation}
Therefore, $A=0$ for equal masses case.
But, obviously $\theta_1-\theta_2=\theta_2-\theta_3=\theta_3-\theta_1=2\pi/3$
and $\omega^2=0$
satisfies \eqref{eqOfMotForTheta2} for equal masses case.
For not equal masses we have $A\ne 0$.
\end{proof}

To finish the proof of Theorem \ref{thm} we only have to prove the non-existence of mass independent scalene Euler shape. We will do it in the next step.

\subsubsection*{Step 3: There are no mass independent scalene 
Euler shape on a rotating meridian.}
\begin{proof}
The equation \eqref{eqRot-0}
in terms of $a=\tau_1$, $b=\tau_2$ is
\begin{equation}\label{eqRot-00}
\cos(a)|\sin(a)|^3
=\cos(b)|\sin(b)|^3
=\cos(a+b)|\sin(a+b)|^3.
\end{equation}
The use of $a$ and $b$ is for later convenience.
Obviously,
$\cos(a)$, $\cos(b)$, and $\cos(a+b)$
must have the same sign.
Therefore, there are four possible regions.
\begin{itemize}
\item[I:]$\pi/2<a<\pi,\pi/2<b<\pi$, and $\pi<a+b<3\pi/2$
\item[II:]$\pi/2<a<\pi,\pi/2<b<\pi$, and $0<a+b<\pi/2$
\item[III:]$-\pi/2<a<0$, $0<b<\pi/2$ and $0<a+b<\pi/2$
\item[IV:]$-\pi/2<a<0$, $0<b<\pi/2$ and $-\pi/2<a+b<0$
\end{itemize}

\subsubsection*{Region I: $\pi/2<a<\pi,\pi/2<b<\pi$, 
$\pi<a+b<3\pi/2$}

For this region, the equations \eqref{eqRot-00}
are equivalent to
\begin{align}
\cos(a)\sin^3(a)&=\cos(b)\sin^3(b),\label{eqReI-1}\\
2^{-1}\big(\cos(a)\sin^3(a)+\cos(b)\sin^3(b)\big)&=-\cos(a+b)\sin^3(a+b).
\label{eqReI-2}
\end{align}

Equation \eqref{eqReI-1} yields
\begin{equation}
\begin{split}\label{eqReI-1B}
0=&\cos(a)\sin^3(a)-\cos(b)\sin^3(b)\\
=&2^{-1}\sin(a-b)
	\Big(\cos(a+b)-\cos(2(a+b))\cos(a-b)\Big).
\end{split}
\end{equation}
Obviously, $a=b$ is a solution,
and this yields $a=b=2\pi/3$ as above.
Here we assume $a\ne b$
to look for other solutions.

Then equation \eqref{eqReI-1B} reduces to
\begin{equation}
\cos(a+b)
	=\cos(2(a+b))\cos(a-b).
\end{equation}
If $\cos(2(a+b))=0$, then $a+b=5\pi/4$.
But $\cos(a+b)=\cos(5\pi/4)\ne 0$.
Therefore $\cos(2(a+b))\ne 0$.
Then
\begin{equation}\label{cosDelta1}
\cos(a-b)
=\frac{\cos(a+b)}{\cos(2(a+b))}\ne 1.
\end{equation}
($=1$ is excluded, because we are assuming $a\ne b$).
The region for the absolute value of the right hand side 
is smaller than $1$, if $4\pi/3 < a+b<3\pi/2$.
 
On the other hand, using
\begin{equation*}
\begin{split}
&\cos(a)\sin^3(a)+\cos(b)\sin^3(b)\\
&=2^{-1}\sin(a+b)\Big(
	\cos(a-b)
	-\cos(2(a-b))\cos(a+b)
	\Big),
\end{split}
\end{equation*}
the equation \eqref{eqReI-2} yields
\begin{equation}
\cos(a-b)
	-\cos(2(a-b))\cos(a+b)
=-4\cos(a+b)\sin^2(a+b).
\end{equation}

Substituting \eqref{cosDelta1} into this equation, we get
the equation for $a+b$,
\begin{equation}
\frac{\cos(a+b)\sin^4(a+b)}{\cos^2(2(a+b))}
\Big(2\cos(2(a+b))+1\Big)=0.
\end{equation}
But there are no solutions for $a+b\in (4\pi/3,3\pi/2)$.

Thus, in this region the unique solution is 
the equilateral triangle shape $a=b=2\pi/3$.

\subsubsection*{Region II: $a,b,a+b\in (0,\pi/2)$}
For this region, the equations are
\begin{align}
\cos(a)\sin^3(a)&=\cos(b)\sin^3(b),\label{eqReII-1}\\
2^{-1}\big(\cos(a)\sin^3(a)+\cos(b)\sin^3(b)\big)&=\cos(a+b)\sin^3(a+b).\label{eqReII-2}
\end{align}
By the same procedure, equation \eqref{eqReII-1} yields,
$a=b$ or the same relation in \eqref{cosDelta1}.

For $a=b$, we get
 $a=b=2^{-1}\arccos((\sqrt{2}-1)/2)$
 as in the previous subsection.
 
For $a\ne b$,
we use the relation \eqref{cosDelta1}.
In the region $a+b\in (0,\pi/2)$,
the range of the solution is
$a+b\in (\pi/3,\pi/2)$.

Now, the equation \eqref{eqReII-2} yields
\begin{equation}
\cos(a-b)-\cos(2(a-b))\cos(a+b)
=4\cos(a+b)\sin^2(a+b).
\end{equation}
Substituting $\cos(a-b)$ in \eqref{cosDelta1}
into this equation, we get
\begin{equation}
\frac{\cos(a+b)\sin^2(a+b)}{\cos^2(2(a+b))}
\Big(2\cos^2(2(a+b))+\cos(2(a+b))+1\Big)=0.
\end{equation}
But, This equation has no solution.

Thus, in this region, the solution is only
the isosceles triangle shape 
$\theta_2-\theta_3=\theta_3-\theta_1=2^{-1}\arccos((\sqrt{2}-1)/2).$
Here the mass $m_3$ is placed in middle of the other two masses.

\subsubsection*{Region III: $-\pi/2<a<0$, $0<b<\pi/2$
and $0<a+b<\pi/2$}

By the definition of $a$ and $b$,
the region in terms of the coordinates 
$\theta_i$ is given by
$\theta_3-\theta_2,\theta_2-\theta_1,\theta_3-\theta_1 \in (0,\pi/2)$.
Now, we redefine 
$a=\theta_3-\theta_2$, $b=\theta_2-\theta_1$,
then $a+b=\theta_3-\theta_1$
with $a,b,a+b\in (0,\pi/2)$.

By this redefinition, equations \eqref{eqRot-00} are invariant,
and the region for the variables $a,b$ is the same as for the Region II.
Therefore, the solution is only the isosceles triangle shape
$\theta_3-\theta_2=\theta_2-\theta_1
=2^{-1}\arccos((\sqrt{2}-1)/2)$.
Here the mass $m_2$ is placed in middle of the other two masses.

\subsubsection*{Region IV: $-\pi/2<a<0$, $0<b<\pi/2$
and $-\pi/2<a+b<0$}

Using a similar argument 
for $a=\theta_3-\theta_1$, $b=\theta_1-\theta_2$ 
we obtain that
the solution is only the isosceles solution
$\theta_3-\theta_1=\theta_1-\theta_2=2^{-1}\arccos((\sqrt{2}-1)/2)$.
Here the mass $m_1$ is placed in middle of the other two masses.

This finish the proof of the step 3. \end{proof}

With all the above, we have proved Theorem \ref{thm}.
\end{proof}

\subsection{Mass independent shapes for relative equilibria in the two body problem and the
restricted three body problem on the sphere}
The arguments given in the previous section are correct, even for the case when one of the masses $m_k>0$ is really small. The next question is: What happen in the limit case when one of the mass, let's say 
$m_3 \to 0.$ 
There are two problems. One is the two body problem,
just considering the two masses $m_1$ and $m_2$
and neglecting the existence of the third mass.
Another one is
``the restricted three body problem on the sphere.'' 
where the position of $m_3$ is concerned.
This last case has been examined in \cite{Kilin, Mtz}. We will show that 
our results are still true in 
the two and the three body problem.

We start from the equations  in \eqref{eqThetaForMeridianForCotangent}.
For $m_3\to 0$,
we obtain two equations
\begin{equation}
\label{twoBodyProblem}
m_1m_2\left(\frac{s\omega^2}{2A}\sin\Big(2(\theta_1-\theta_2)\Big)
- \frac{\sin(\theta_1-\theta_2)}{|\sin(\theta_1-\theta_2)|^3}
\right)=0,
\end{equation}
and
\begin{equation}
\label{CondForTheta3InRestrictedThreeBody}
\begin{split}
&m_2\left(\frac{s\omega^2}{2A}\sin\Big(2(\theta_2-\theta_3)\Big)
- \frac{\sin(\theta_2-\theta_3)}{|\sin(\theta_2-\theta_3)|^3}
\right)\\
=&m_1\left(\frac{s\omega^2}{2A}\sin\Big(2(\theta_3-\theta_1)\Big)
- \frac{\sin(\theta_3-\theta_1)}{|\sin(\theta_3-\theta_1)|^3}
\right).
\end{split}
\end{equation}
Note that the last equation does not contain the term $m_3$.

We have the following propositions.
\begin{proposition}
\label{2bodyMassInd}
For the two body problem on a rotating meridian,
any shape $|\theta_1-\theta_2|\in (0,\pi)$ except $\pi/2$
is a mass independent shape.
\end{proposition}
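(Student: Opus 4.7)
The plan is to reduce \eqref{twoBodyProblem} to a single scalar condition on the arc $\tau := \theta_1 - \theta_2$ alone, and to exhibit an explicit $\omega^2 > 0$ that satisfies it for every choice of positive masses, provided $|\tau| \in (0,\pi) \setminus \{\pi/2\}$.

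First I would observe that $m_1 m_2 > 0$, so \eqref{twoBodyProblem} is equivalent to the vanishing of the parenthesized factor. Expanding $\sin(2\tau) = 2\sin\tau\cos\tau$ and cancelling the factor $\sin\tau$ (nonzero in the allowed range), the equation collapses to
\begin{equation*}
\frac{s\,\omega^2}{A}\cos\tau \;=\; \frac{1}{\sin^2\tau},
\end{equation*}
which I would solve explicitly as $\omega^2 = A/(s\cos\tau\,\sin^2\tau)$. Choosing the sign $s \in \{+1,-1\}$ opposite to that of $\cos\tau$ forces $\omega^2 > 0$.

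The one substantive point is the positivity of $A$ whenever $\tau \ne \pi/2$. From the definition \eqref{defOfA} specialized to $m_3 = 0$ I would compute
\begin{equation*}
A^2 \;=\; (m_1+m_2)^2 - 4m_1 m_2\sin^2\tau,
\end{equation*}
and note the lower bound $A^2 \geq (m_1-m_2)^2$, with strict inequality whenever $\sin^2\tau < 1$; hence $A > 0$ throughout the allowed range. This legitimizes the closed-form expression for $\omega^2$.

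Finally I would rule out $\tau = \pi/2$ by direct substitution into the parenthesized factor of \eqref{twoBodyProblem}: the first term vanishes because $\sin(2\tau) = \sin\pi = 0$, while the second equals $1$, so the bracket is identically $-1$ and no $\omega$ can make it zero. I anticipate no real obstacle here; the only ingredient of substance is the elementary estimate giving $A > 0$, and everything else is forced by the cancellation of $\sin\tau$ and a suitable choice of sign $s$.
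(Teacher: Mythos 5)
Your argument is correct and takes essentially the same route as the paper's proof, which simply observes that equation \eqref{twoBodyProblem} can be satisfied by choosing $s\omega^2$ properly; you merely make explicit the details the paper leaves implicit (the bound $A^2=(m_1+m_2)^2-4m_1m_2\sin^2\tau>0$, the closed form for $\omega^2$, and the direct exclusion of $\tau=\pi/2$). One cosmetic slip: cancelling $\sin\tau$ from $\sin\tau/|\sin\tau|^3$ leaves $1/|\sin\tau|^3$, not $1/\sin^2\tau$, so the solution should read $\omega^2=A/(s\cos\tau\,|\sin\tau|^3)$ --- this changes nothing in the conclusion, since the right-hand side is still positive and finite precisely when $\cos\tau\ne 0$.
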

\begin{proof}
For the two body problem, 
the condition of $ERE$ on a rotating meridian
is only the equation \eqref{twoBodyProblem}.
Obviously,
$|\theta_1-\theta_2|\in (0,\pi)$ except $\pi/2$
satisfies this condition by choosing $s\omega^2$ properly.
\end{proof}

\begin{proposition}
The mass independent shapes in the restricted three body
problem on a rotating meridian
are the same as in the three body problem on the sphere with finite masses.
\end{proposition}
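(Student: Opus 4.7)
My plan is to show that, once the shape is fixed and the scalar $s\omega^2/(2A)$ is determined from \eqref{twoBodyProblem}, the remaining equation \eqref{CondForTheta3InRestrictedThreeBody} splits into a pair of identities in $(m_1,m_2)$ whose coefficients must vanish separately. This recovers exactly the three equations \eqref{eqForMassIndependentShape0} that defined mass independence in the full three body problem, at which point Theorem~\ref{thm} finishes the argument.

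For the execution, first observe that since $m_1 m_2 > 0$, equation \eqref{twoBodyProblem} collapses to the scalar condition
\[
\frac{s\omega^2}{2A} = \frac{1}{2\cos(\theta_1-\theta_2)\,|\sin(\theta_1-\theta_2)|^{3}},
\]
a function of the arc $\theta_1 - \theta_2$ alone. Substituting this value back into \eqref{CondForTheta3InRestrictedThreeBody} eliminates all explicit mass dependence inside the two parenthesized expressions; denoting these shape-only quantities by $G_1$ and $G_2$, the equation reads $m_2 G_1 = m_1 G_2$. A mass independent shape must satisfy this identity for every $(m_1, m_2) \in \mathbb{R}_{>0}^{2}$, and the linear independence of the two coordinate functions $m_1$ and $m_2$ on this cone forces $G_1 = G_2 = 0$. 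Combined with the scalar form of \eqref{twoBodyProblem}, these two vanishings reproduce the full three body system \eqref{eqForMassIndependentShape0}, so Theorem~\ref{thm} yields only the isosceles shape with $\tau_0 = 2^{-1}\arccos((\sqrt{2}-1)/2)$ and the equilateral shape $\tau_\textrm{e} = 2\pi/3$. The converse inclusion is immediate: every shape satisfying \eqref{eqForMassIndependentShape0} automatically fulfils \eqref{twoBodyProblem} and \eqref{CondForTheta3InRestrictedThreeBody} for all $(m_1,m_2)$.

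The only delicate point I foresee is justifying the divisions by $A$ and by $\cos(\theta_1-\theta_2)$. In the $m_3 \to 0$ limit one has $A^{2} = m_1^{2} + m_2^{2} + 2 m_1 m_2 \cos(2(\theta_1-\theta_2))$, which is strictly positive except on the one parameter set $\{m_1 = m_2,\ \theta_1 - \theta_2 = \pm \pi/2\}$; this set is disjoint from the two shapes singled out by Theorem~\ref{thm}, so it contributes nothing to the mass independent shape space. The vanishing $\cos(\theta_1-\theta_2)=0$ is the only other obstruction coming from the primary equation, and it is already ruled out by Proposition~\ref{2bodyMassInd}. Hence the reduction is valid throughout the relevant shape space and the two classifications coincide.
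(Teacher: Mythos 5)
Your argument is correct and follows essentially the same route as the paper's own (much terser) proof: both reduce the restricted problem to the vanishing of every parenthesized term in \eqref{twoBodyProblem} and \eqref{CondForTheta3InRestrictedThreeBody}, which reproduces the conditions \eqref{eqForMassIndependentShape0} of the full three body problem. Your additional care with the linear-independence step forcing $G_1=G_2=0$ and with the exceptional sets $A=0$ and $\cos(\theta_1-\theta_2)=0$ only makes explicit what the paper leaves implicit.
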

\begin{proof}
The conditions for this problem
are \eqref{twoBodyProblem} and 
\eqref{CondForTheta3InRestrictedThreeBody}.
The last one determines the position of $m_3$.
Therefore, a mass independent shape must make
all terms inside of the parentheses zero.
That is, the conditions are the same as for the three body problem with finite masses.
\end{proof}

\section{Configuration of mass independent shape
for several masses}\label{ind-conf}
Even for a mass independent shape $\{\sigma_{ij}\}$,
the configuration $\{\theta_k, \phi_i-\phi_j\}$
depends on the masses $\{m_k\}$,
because of the mass dependence of the rotation axis
($z$-axis) given by the inertia tensor \cite{F-P}.

\subsection{Configurations of equilateral solution}
\label{secConfigEquilateral}
For not equal masses case 
(at least two masses are different),
the equilateral solution has $A>0$.
Then, by equation \eqref{condition},
$s=-1$ and $\omega^2=8A/(3\sqrt{3})$.
The configuration 
is given by equations \eqref{translationFormula},
\begin{equation}\label{conf-eq}
\begin{split}
\cos(2\theta_1)&=\frac{1}{A}\left(\frac{m_2+m_3}{2}-m_1\right),\\
\sin(2\theta_1)&=\frac{\sqrt{3}}{2A}(m_2-m_3).
\end{split}
\end{equation}
In Figure \ref{figConfigurations}, we show 
the configurations for several masses.
The configuration is uniquely determined by equation \eqref{conf-eq}.

On the other hand,
for equal masses case, the configuration is indefinite.
Namely, any configurations with $\theta_i-\theta_j=2\pi/3$,
$(i,j)=(1,2)$, $(2,3)$, $(3,1)$ satisfy the equation of motion with $\omega=0$.
(See Step 2, in the proof of Theorem \ref{thm}.)
\begin{figure}
   \centering
   \includegraphics[width=3cm]{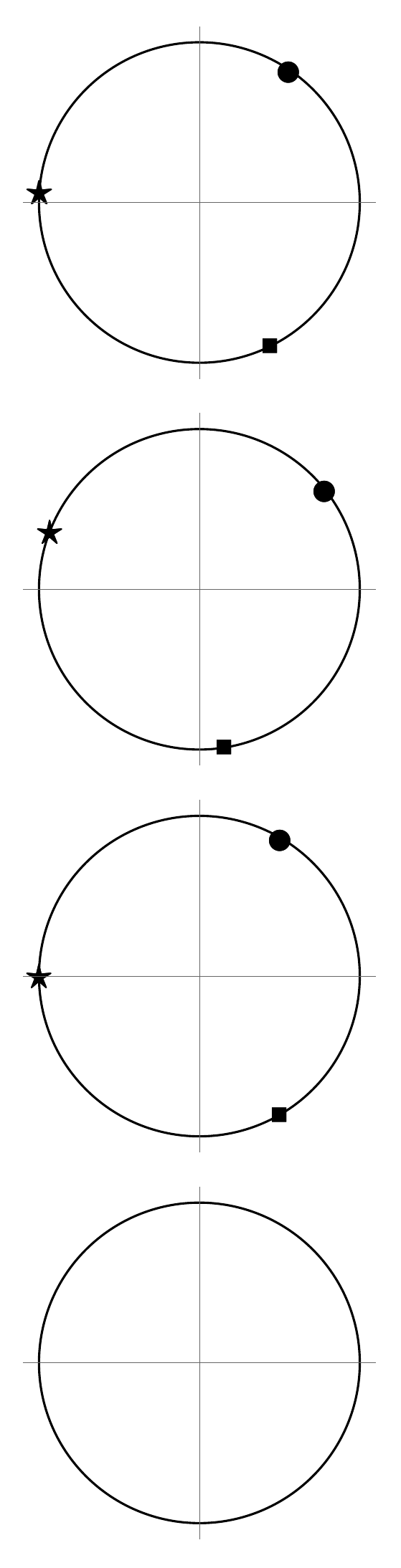}%
   \includegraphics[width=3cm]{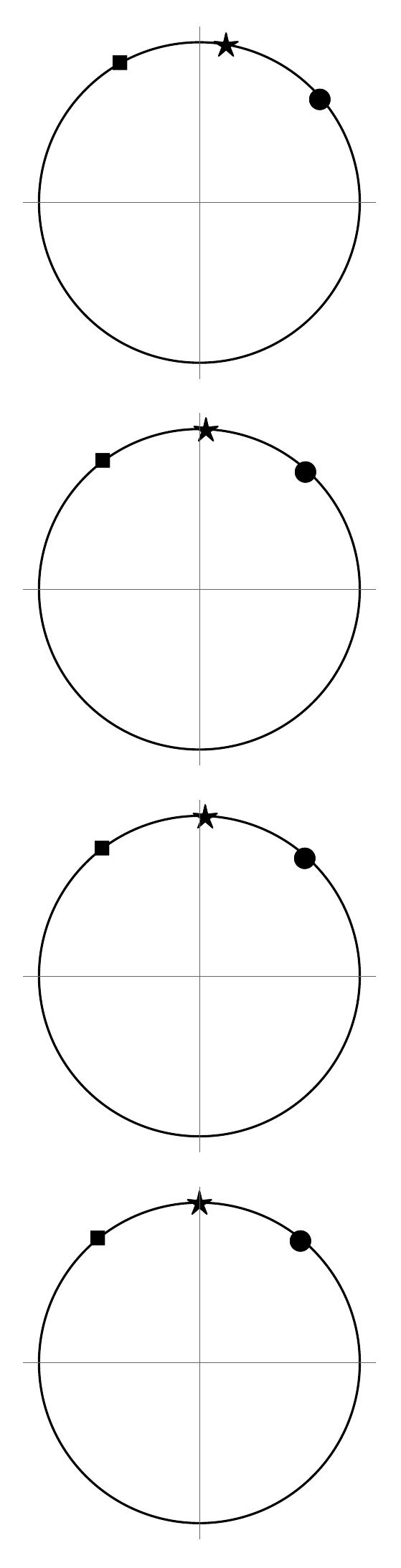}%
    \includegraphics[width=3cm]{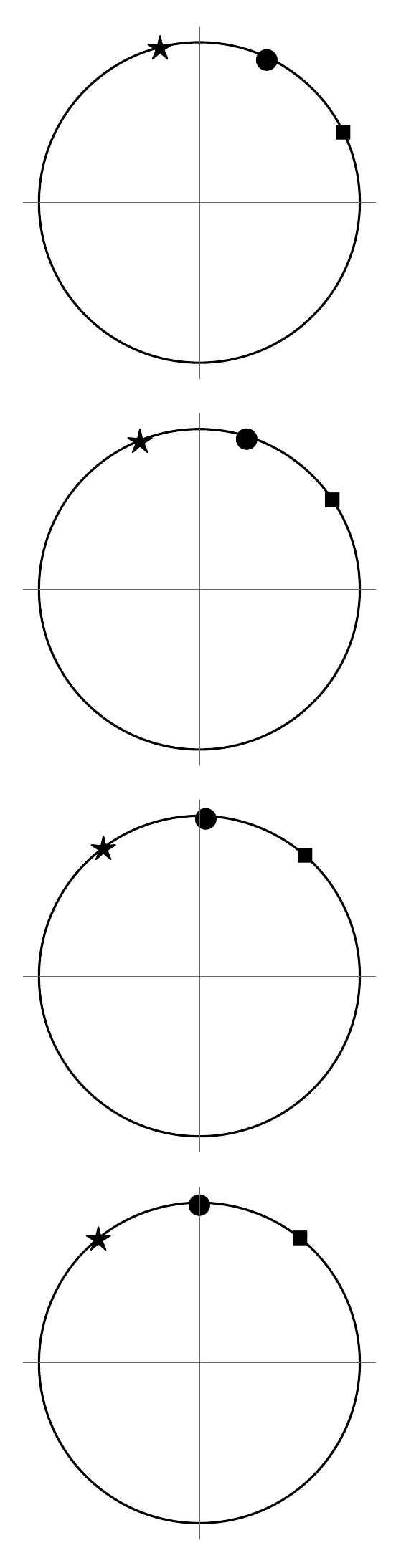}%
    \includegraphics[width=3cm]{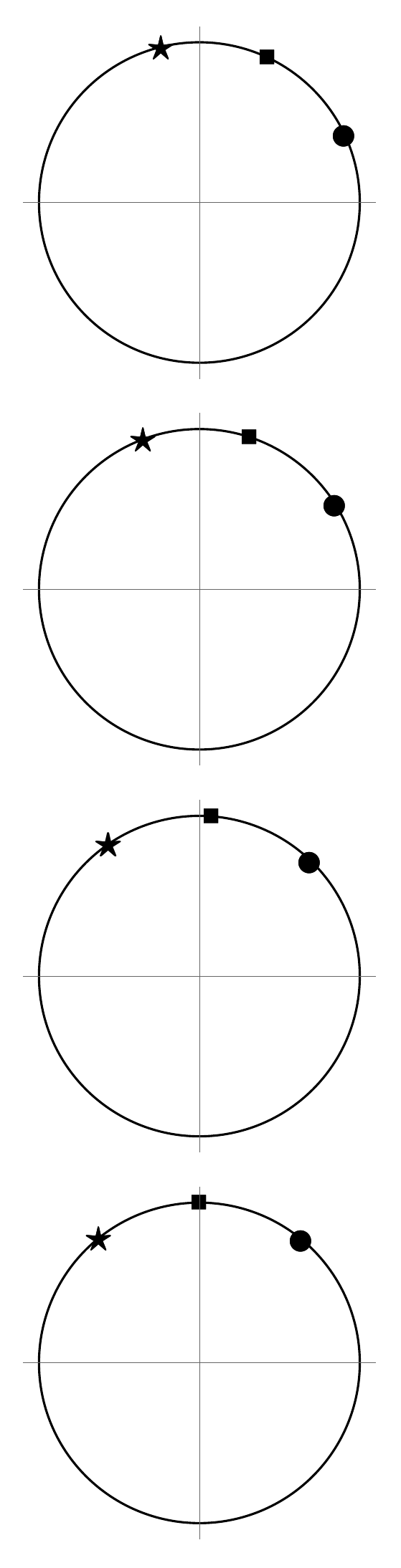}
   \caption{Configurations
   for several masses, from top to bottom,
   $(m_1,m_2,m_3)=(0.1,0.5,1)$,
   $(0.4,0.5,1)$,
   $(0.8,0.9,1)$, and
   $(1,1,1)$.
   The vertical and horizontal axes represent the axis of rotation ($z$-axis)
   and the equator, respectively.
   The masses $m_1$, $m_2$, and $m_3$ are indicated by
   the ball, square, and star, respectively.
   Columns from left to right, 
   the equilateral configuration (left column),
   the isosceles whose centre is $m_3$ (the second column),
   $m_1$ (the third column),
   and $m_2$ (the right).
   The configuration for equilateral of equal masses
   (the bottom left corner) is indefinite,
   any configuration for equilateral shape is $RE$. 
   See section \ref{secConfigEquilateral} for detail.
   }
   \label{figConfigurations}
\end{figure}

\subsection{Configurations of isosceles solution}
For the isosceles shape,
$A>0$,
$s=1$ and $\omega^2=16A/\sqrt{16\sqrt{2}-12}$.
Let $\tau_0=2^{-1}\arccos((\sqrt{2}-1)/2)$
be the arc angle of equal arcs.

For $\theta_2-\theta_3=\theta_3-\theta_1=\tau_0$,
\begin{equation}
A^2=\sum_\ell m_\ell^2
	-m_1m_2\left(2\sqrt{2}-1\right)
	+(m_1+m_2)m_3\left(\sqrt{2}-1\right),
\end{equation}
for $\theta_3-\theta_1=\theta_1-\theta_2=\tau_0$,
\begin{equation}
A^2=\sum_\ell m_\ell^2
	-m_2m_3\left(2\sqrt{2}-1\right)
	+(m_2+m_3)m_1\left(\sqrt{2}-1\right),
\end{equation}
and for 
$\theta_3-\theta_2=\theta_2-\theta_1=\tau_0$,
\begin{equation}
A^2=\sum_\ell m_\ell^2
	-m_3m_1\left(2\sqrt{2}-1\right)
	+(m_3+m_1)m_2\left(\sqrt{2}-1\right).
\end{equation}

In Figure \ref{figConfigurations}, we show 
the configurations for several masses.

\section{Continuation of 
$RE$ shape from
mass independent Euler shape}\label{cont}
In the previous sections,
we were concentrated on the mass independent shapes.
In this section, we search the continuations of 
$RE$ shape near the 
mass independent Euler shapes. We will show that every one of them can be continued to 
mass dependent
Euler shapes.

\subsection{Condition for Euler shapes 
on a rotating meridian}
In this subsection,
we review the condition for Euler shapes
on a rotating meridian.

We have seen that equation \eqref{eqThetaForMeridianForCotangent}
is a necessary and sufficient condition 
for Euler shape.
Let $F_{ij}$ and $G_{ij}$ be
\begin{equation}
\begin{split}
F_{ij}
&=m_im_j\frac{\sin\tau_k}{|\sin\tau_k|^3}
=\frac{m_im_j}{f(\tau_k)},\\
G_{ij}&=m_im_j\sin(2\tau_k),
\end{split}
\end{equation}
for $(i,j,k)=(1,2,3),(2,3,1)$, and $(3,1,2)$,
where $f(x)=\sin(x)|\sin(x)|$.
Then, the condition \eqref{eqThetaForMeridianForCotangent}
for the shape  
$\{\tau_k\}$ 
is equivalent to (see \cite{F-P} for details)
\begin{equation}
d
=\left|\begin{array}{cc}
G_{12}-G_{23} & G_{31}-G_{12} \\
F_{12}-F_{23} & F_{31}-F_{12}
\end{array}\right|
=0
\end{equation} 
for $A\ne 0$.
The explicit expression for $d$ is
\begin{equation}
d=\frac{-m_1m_2m_3 \,g}{f(\tau_1)f(\tau_2)f(\tau_3)},
\end{equation}
where 
\begin{equation}
\begin{split}
g=&m_1f(\tau_1)\Big(
	f(\tau_2)\sin(2\tau_2)-f(\tau_3)\sin(2\tau_3)\Big)\\
&+m_2f(\tau_2)\Big(
	f(\tau_3)\sin(2\tau_3)-f(\tau_1)\sin(2\tau_1)\Big)\\
&+m_3f(\tau_3)\Big(
	f(\tau_1)\sin(2\tau_1)-f(\tau_2)\sin(2\tau_2)\Big).
\end{split}
\end{equation}

Any solution of $g=0$ in $(\tau_1,\tau_2)\in U_\textrm{phys}$
with $A\ne 0$ is an Euler shape.
Therefore, the condition $g=0$ defines one dimensional continuation
of Euler shape in the shape space $U_\textrm{phys}$.

\subsection{Euler shapes near the  equilateral Euler shape}
For Euler shapes near the equilateral solution
$p_\textrm{I}=(2\pi/3,2\pi/3)$,
it is sufficient to consider the region
$U=\{(\tau_1,\tau_2)|
\sin(\tau_1)>0,
\sin(\tau_2)>0,
\sin(\tau_1+\tau_2)<0\}\cap U_\textrm{phys}$.

It is easy to verify that $A^2=\sum_{i<j}(m_i-m_j)^2$ at $p_\textrm{I}$.

\begin{proposition}
For not equal masses, two continuations of mass dependent Euler shape
pass through the equilateral Euler shape.
\end{proposition}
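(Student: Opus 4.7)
The plan is to recognise $p_\textrm{I}=(2\pi/3,2\pi/3)$ as a Morse saddle of the analytic function $g$ on the shape space. Once this is established, the zero set $\{g=0\}$ is locally the image of the two transverse lines $\{x=\pm y\}$ under a Morse chart, yielding two smooth branches of Euler shapes crossing at $p_\textrm{I}$. Their tangent directions will depend on the masses, and by Theorem~\ref{thm} the equilateral is the only mass-independent Euler shape nearby, so both branches are mass dependent continuations.

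First I would restrict to a neighborhood of $p_\textrm{I}$ inside $U$, where $\sin\tau_1,\sin\tau_2>0$ and $\sin(\tau_1+\tau_2)<0$, so $f(\tau_k)=\sin^2(\tau_k)$ for $k=1,2,3$ (using $\tau_3=-\tau_1-\tau_2$, so $\sin\tau_3>0$) and $g$ is real analytic. Writing $V(x)=\sin^2(x)$ and $u(x)=V(x)\sin(2x)=2\sin^3(x)\cos(x)$, with $V_k=V(\tau_k)$ and $u_k=u(\tau_k)$, the formula for $g$ becomes
\[ g=m_1V_1(u_2-u_3)+m_2V_2(u_3-u_1)+m_3V_3(u_1-u_2). \]
At $p_\textrm{I}$ one has $V_1=V_2=V_3=3/4$ and $u_1=u_2=u_3=-3\sqrt{3}/8$, so $g(p_\textrm{I})=0$ for all masses. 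The decisive observation is that $u'(x)=2\sin^2(x)(4\cos^2(x)-1)$ vanishes at both $2\pi/3$ and $-4\pi/3$, because $\cos^2=1/4$ there. Differentiating $g$ and using the chain rule $\partial\tau_3/\partial\tau_i=-1$, every term in $\partial g/\partial\tau_j$ at $p_\textrm{I}$ carries a factor of $u_i-u_j$ or $u'_k$, each of which vanishes. Hence $\nabla g(p_\textrm{I})=0$ for all $\{m_k\}$, the infinitesimal form of mass independence.

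The main task is the Hessian. Second derivatives yield contributions involving $u''_k$ and $V''_k$, and the remaining terms still drop out because they keep a $u_i-u_j$ or $u'_k$ factor. Using $u''(2\pi/3)=u''(-4\pi/3)=3\sqrt{3}$, a direct but slightly lengthy chain-rule computation produces
\[ H(p_\textrm{I})=\frac{9\sqrt{3}}{4}\begin{pmatrix}m_3-m_1 & m_2-m_1\\ m_2-m_1 & m_2-m_3\end{pmatrix},\qquad \det H(p_\textrm{I})=-\frac{1}{2}\Bigl(\frac{9\sqrt{3}}{4}\Bigr)^{\!2}\sum_{i<j}(m_i-m_j)^2. \]
For not-equal masses this determinant is strictly negative, so $p_\textrm{I}$ is a non-degenerate, indefinite critical point of the analytic function $g$. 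By the analytic Morse lemma there is a local analytic chart in which $g$ becomes $x^2-y^2$, whose zero set is the pair of transverse lines $x=\pm y$; pulling back gives two smooth arcs of $\{g=0\}$ crossing transversally at $p_\textrm{I}$, i.e. two continuations of the Euler shape curve.

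The main obstacle will be the Hessian bookkeeping: the expansions of $\partial^2 g/\partial\tau_i\partial\tau_j$ contain many apparently non-vanishing terms that collapse only after invoking both $u'(p_\textrm{I})=0$ and $u_i(p_\textrm{I})=u_j(p_\textrm{I})$, together with careful tracking of signs produced by $\tau_3=-(\tau_1+\tau_2)$. Once the Hessian is in hand and its determinant is shown to be negative, Morse's lemma delivers the two branches immediately, and their mass-dependence follows from the explicit mass dependence of $H$ (different masses give different eigenvectors, hence different tangent lines).
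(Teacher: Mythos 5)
Your proof is correct and follows essentially the same route as the paper: show that $p_\textrm{I}=(2\pi/3,2\pi/3)$ is a critical point of $g$ with nondegenerate indefinite Hessian, $\det H=-\tfrac{243}{32}\sum_{i<j}(m_i-m_j)^2<0$, hence two transverse branches of $\{g=0\}$ cross there (the paper phrases the last step as ``saddle point, so two contours pass through'' where you invoke the Morse lemma). The only point you leave implicit, and which the paper states explicitly, is that $g=0$ characterizes Euler shapes only where $A\neq 0$; since $A^2=\sum_{i<j}(m_i-m_j)^2>0$ at $p_\textrm{I}$ for not equal masses and $A$ is continuous, this holds on a neighbourhood, so your two branches are indeed continuations of Euler shapes.
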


\begin{proof}
For not equal masses case,
since at least one mass is different from the other,
$A^2>0$ at  $p_\textrm{I}$.
Since $A$ is a continuous function of $p$, 
we can take a region in $U$ around $p_\textrm{I}$,
where $A\ne 0$,
and thus any solution of $g=0$ in this region gives 
Euler shape.

 At $p_\textrm{I}$,
$g=\partial g/\partial \tau_1=\partial g/\partial \tau_2=0$.
And the Hessian at this point is
\begin{equation}
H
=\left(\begin{array}{cc}
	\partial^2 g/\partial \tau_1^2 &\partial^2 g/(\partial \tau_1\partial \tau_2) \\
	\partial^2 g/(\partial \tau_2\partial \tau_1) & \partial^2 g/\partial \tau_2^2
\end{array}\right)
=\frac{9\sqrt{3}}{4}
	\left(\begin{array}{cc}
	m_3-m_1 & m_2-m_1 \\
	m_2-m_1 & m_2-m_3
\end{array}\right).
\end{equation}
The determinant is given by
\begin{equation}
\det H=-\frac{243}{32}\sum_{i<j} (m_i-m_j)^2.
\end{equation}
 
By the assumption 
that at least two masses are 
different,
$\det H<0$.
Therefore the point $p_\textrm{I}$ is a saddle point.
So, two $g=0$ contours will pass through
this point.
\end{proof}

\begin{proposition}
For equal masses case, three continuations of 
Euler shape pass through the equilateral Euler shape.
\end{proposition}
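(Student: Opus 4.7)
The plan is to exhibit three explicit continuations through $p_\textrm{I}=(2\pi/3,2\pi/3)$ by means of an $S_3$-symmetry argument, and then use a cubic Taylor expansion of $g$ at $p_\textrm{I}$ to confirm that no further branches exist.

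First I would observe that when $m_1=m_2=m_3=m$, dividing $g$ by $m$ yields the totally antisymmetric expression
\[
g/m = f(\tau_1)\bigl(h(\tau_2)-h(\tau_3)\bigr) + f(\tau_2)\bigl(h(\tau_3)-h(\tau_1)\bigr) + f(\tau_3)\bigl(h(\tau_1)-h(\tau_2)\bigr),
\]
with $h(\tau)=f(\tau)\sin(2\tau)$ and $\tau_1+\tau_2+\tau_3\equiv 0 \pmod{2\pi}$. By antisymmetry, $g/m$ vanishes on each of the three diagonal loci $\tau_i=\tau_j$. In the independent coordinates $(\tau_1,\tau_2)$ these become three distinct straight lines through $p_\textrm{I}$, namely $\tau_1=\tau_2$, $2\tau_1+\tau_2=2\pi$, and $\tau_1+2\tau_2=2\pi$, whose tangent directions at $p_\textrm{I}$ are pairwise distinct (slopes $1$, $-2$, $-\tfrac12$). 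Along $\tau_1=\tau_2=\tau$ a short computation gives $A^2=m^2(1+2\cos 2\tau)^2$, which is nonzero for $\tau$ in a punctured neighborhood of $2\pi/3$; by the permutation symmetry the same holds along the other two lines. Hence, by the criterion $g=0,\;A\neq 0$ for Euler shape recalled in Section~\ref{cont}, each of the three lines is a continuation of Euler $RE$ shape through $p_\textrm{I}$.

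To show that these are the only continuations, I would Taylor-expand $g$ at $p_\textrm{I}$ in the local coordinates $\epsilon_i=\tau_i-2\pi/3$ (so $\epsilon_1+\epsilon_2+\epsilon_3=0$). The Hessian vanishes identically for equal masses by the explicit formula of the previous proposition, so the leading term is cubic. Rewriting
\[
g/m = -\det\!\begin{pmatrix}1 & h(\tau_1) & f(\tau_1)\\ 1 & h(\tau_2) & f(\tau_2)\\ 1 & h(\tau_3) & f(\tau_3)\end{pmatrix}
\]
and using $f(2\pi/3)=3/4$, $f'(2\pi/3)=-\sqrt{3}/2$, $h'(2\pi/3)=0$, $h''(2\pi/3)=3\sqrt{3}$, a direct expansion of the $3\times 3$ determinant yields a leading cubic that is a nonzero multiple of $(\epsilon_1-\epsilon_2)(2\epsilon_1+\epsilon_2)(\epsilon_1+2\epsilon_2)$, a product of three pairwise non-proportional real linear factors. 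By the standard branch-counting fact for plane real-analytic functions (if the leading homogeneous part is a product of $k$ distinct linear forms, the zero set consists locally of exactly $k$ smooth branches, each tangent to the corresponding form), the locus $\{g=0\}$ at $p_\textrm{I}$ consists of exactly three smooth branches, and matching tangent directions identifies them with the three isosceles lines already found.

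The main obstacle is the cubic computation itself: since both first and second derivatives of $g$ vanish at $p_\textrm{I}$, one must push the expansion to third order. However, the determinantal form of $g/m$ together with $h'(2\pi/3)=0$ keeps the calculation short, and the permutation symmetry forces each of the three isosceles lines to appear as a linear factor of the cubic, which essentially pins down the factorization up to an overall constant.
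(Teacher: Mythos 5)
Your proposal is correct, and it reaches the conclusion by a route that differs from the paper's in its second half. For the existence of the three branches you use the total antisymmetry of $g/m$ in $(\tau_1,\tau_2,\tau_3)$ to conclude vanishing on the three diagonals $\tau_i=\tau_j$; the paper instead exhibits the exact global factorization
$g=\tfrac{m}{2}\bigl(3-\cos\tau_1-\cos\tau_2-\cos(\tau_1+\tau_2)\bigr)\sin(\tau_1-\tau_2)\sin(2\tau_1+\tau_2)\sin(\tau_1+2\tau_2)$
in the region $U$, whose last three factors are precisely your three lines. These are two faces of the same observation (antisymmetry forces divisibility by the diagonal factors), but the executions diverge in how completeness is established: the paper reads off from the factorization that the zero set of $g$ in $U$ is \emph{exactly} the union of the three lines, since the first factor is positive there, whereas you expand $g$ to third order at $p_{\mathrm{I}}$, identify the leading cubic as a nonzero multiple of $(\epsilon_1-\epsilon_2)(2\epsilon_1+\epsilon_2)(\epsilon_1+2\epsilon_2)$ (your values $f(2\pi/3)=3/4$, $f'(2\pi/3)=-\sqrt{3}/2$, $h'(2\pi/3)=0$, $h''(2\pi/3)=3\sqrt{3}$ are all correct and match the leading coefficient $\tfrac{9m}{4}$ one gets from the paper's factorization), and invoke the standard branch-counting fact for a leading form that is a product of distinct simple real linear factors. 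Your approach buys a purely local argument that does not require guessing or verifying the trigonometric identity, at the price of a third-order expansion and an appeal to local real-analytic geometry; the paper's buys a global description of the Euler-shape locus for equal masses, which it also uses elsewhere. Both treatments handle the $A=0$ issue adequately: the paper notes that $p_{\mathrm{I}}$ is the unique zero of $A$ in $U$ and is itself an Euler shape, while you verify $A^2=m^2(1+2\cos 2\tau)^2\neq 0$ in a punctured neighbourhood along each line (which is correct); for full parity with the paper you would only need to add the one-line remark that $p_{\mathrm{I}}$ itself is an Euler shape with $\omega=0$, already established in Step~2 of Theorem~\ref{thm}.
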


\begin{proof}
For equal masses case $m_k=m$, 
$A=0$ has  
just
one solution in $U$,
given by $p_\textrm{I}$.
See Corollary \ref{corSolsOfAeqZeroEqualMasses}
in Appendix \ref{secSolOfA}
for a proof.
As shown above, $p_\textrm{I}$ gives 
Euler shape.
Therefore, any solution of $g=0$ in $U$ gives 
Euler shape.
Fortunately for equal mass in $U$,
the function $g$
has the following simple form
\begin{equation}
\begin{split}
g=\frac{m}{2}
	&\Big(3-\cos\tau_1-\cos\tau_2-\cos(\tau_1+\tau_2)\Big)\\
	&\sin(\tau_1-\tau_2)
	\sin(2\tau_1+\tau_2)
	\sin(\tau_1+2\tau_2).
\end{split}
\end{equation}
Since the first term is positive in $U$,
the solution of $g=0$ in  $U$ are
$\tau_1=\tau_2$,
$2\tau_1+\tau_2=2\pi$, or $\tau_1+2\tau_2=2\pi$.
Thus, on the  
$(\tau_1,\tau_2)$ plane,
the three above straight lines pass through the point 
$p_\textrm{I}=(2\pi/3, 2\pi/3)$,
that is, 
the equilateral triangle Euler shape is not isolated.
\end{proof}

\subsection{Euler shape near 
the isosceles mass independent Euler shape}
In this subsection we will show that each one of the three isosceles mass independent Euler shapes
is not isolated, it has a continuation of mass dependent Euler shapes.

\begin{proposition}
Each one of the three mass independent 
isosceles Euler shape has one continuation of
mass dependent Euler shape.
\end{proposition}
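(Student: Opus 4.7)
The plan is to apply the implicit function theorem to the Euler-shape equation $g(\tau_1,\tau_2)=0$ at the isosceles mass independent point $p_{\mathrm{II}}=(\tau_0,\tau_0)$, with $\tau_0=2^{-1}\arccos((\sqrt{2}-1)/2)$. Once this representative case is handled, the other two isosceles mass independent shapes, $(-2\tau_0,\tau_0)$ and $(-\tau_0,2\tau_0)$, follow by cyclically relabelling the masses. By the criterion recalled in the previous subsection, any zero of $g$ in $U_{\mathrm{phys}}$ at which $A\ne 0$ is an Euler shape, so it is enough to produce a smooth one-parameter family of such zeros through $p_{\mathrm{II}}$.

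First I would verify that $A\ne 0$ at $p_{\mathrm{II}}$ for every positive mass triple. Formula \eqref{AforIsosceles} displays $A^2$ as a quadratic form in the masses whose coefficients $3-2\sqrt{2}$ and $\sqrt{2}-1$ are strictly positive, so $A>0$, and hence $A\ne 0$ in a small neighbourhood of $p_{\mathrm{II}}$ by continuity. Next, $g(p_{\mathrm{II}})=0$ holds for every mass triple: writing $h(x):=f(x)\sin(2x)=2|\sin x|^3\cos x$, each of the three brackets in the definition of $g$ is a difference $h(\tau_i)-h(\tau_j)$, and the mass independence identity \eqref{eqRot-0} makes each such difference vanish at $p_{\mathrm{II}}$.

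The heart of the argument is to show that $\nabla g(p_{\mathrm{II}})\ne 0$. Since all brackets vanish at $p_{\mathrm{II}}$, only the terms in which a bracket is differentiated contribute. The function $h$ is even, so $h'$ is odd, and the values $f(\tau_0)=\sin^2\tau_0$, $f(-2\tau_0)=-\sin^2(2\tau_0)$, $h'(-2\tau_0)=-h'(2\tau_0)$ enter everywhere. A short computation shows that the piece of $\nabla g(p_{\mathrm{II}})$ that is symmetric under $\tau_1\leftrightarrow\tau_2$ drops out in the difference, yielding the compact expression
\begin{equation*}
\frac{\partial g}{\partial \tau_2}-\frac{\partial g}{\partial \tau_1}
=\bigl[(m_1+m_2)\sin^2\tau_0+2\,m_3\sin^2(2\tau_0)\bigr]\,h'(\tau_0).
\end{equation*}
The bracket is strictly positive on the positive orthant, and an explicit evaluation using $\cos(2\tau_0)=(\sqrt{2}-1)/2$ gives $h'(\tau_0)=2\sin^2\tau_0\bigl(3\cos^2\tau_0-\sin^2\tau_0\bigr)=(3\sqrt{2}-2)/2>0$, so the whole difference is strictly positive and therefore $\nabla g(p_{\mathrm{II}})\ne 0$.

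The implicit function theorem then produces a smooth curve $\gamma\subset U_{\mathrm{phys}}$ of zeros of $g$ through $p_{\mathrm{II}}$; after shrinking $\gamma$ so that $A\ne 0$ on it, every point of $\gamma$ is a genuine Euler shape. By Theorem \ref{thm} the only mass independent Euler shapes are the equilateral triangle and the three isosceles points, so every point of $\gamma$ distinct from $p_{\mathrm{II}}$ is a mass dependent Euler shape, which is the sought continuation. The main obstacle is organising the gradient calculation cleanly enough to see the nonvanishing for every positive mass triple; the antisymmetric combination $\partial_{\tau_2}g-\partial_{\tau_1}g$ is the key observation that makes this transparent and reduces the matter to verifying the single scalar inequality $h'(\tau_0)>0$.
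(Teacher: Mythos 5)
Your proof is correct and follows essentially the same route as the paper: verify $A>0$ at $p_{\mathrm{II}}$, verify $g(p_{\mathrm{II}})=0$, show $\nabla g(p_{\mathrm{II}})\ne 0$, invoke the implicit function theorem, and handle the other two isosceles points by relabelling. The only difference is in how the nonvanishing of the gradient is certified: the paper computes $g_1$ and $g_2$ explicitly and observes that $g_1=g_2=0$ would force $m_1=m_2=-(1+\sqrt{2})m_3<0$, whereas you use the single positive combination $g_2-g_1=\bigl[(m_1+m_2)\sin^2\tau_0+2m_3\sin^2(2\tau_0)\bigr]h'(\tau_0)$, which indeed agrees with the difference of the paper's explicit formulas (coefficients $(11\sqrt{2}-12)/8$ and $2(10-\sqrt{2})/8$), so your computation checks out.
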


\begin{proof} 
It is enough to show a proof for 
the mass independent isosceles Euler shape
given by
 $p_\textrm{II}=(\tau_0,\tau_0)$, with 
$\tau_0=2^{-1}\arccos((\sqrt{2}-1)/2)$.  
The proof for the other two shapes $p_\textrm{III}=(-2\tau_0,\tau_0)$
and $p_\textrm{IV}=(-\tau_0,2\tau_0)$ follows in a similar way using the same redefinition of coordinates as in Region III and Region IV, in the previous section.

We consider the region $U=\{(\tau_1,\tau_2)|
\sin(\tau_1)>0,
\sin(\tau_2)>0,
\sin(\tau_1+\tau_2)>0\}\cap U_\textrm{phys}$ near the point 
$p_\textrm{II}=(\tau_0,\tau_0)$.
 
Since at this point $A>0$, we can find a small region in $U$ where  $A>0$ and therefore any solution of $g=0$ 
gives an Euler shape.

Now, at $p_\textrm{II}$,
$g=0$ and
\begin{equation}
\begin{split}
g_1
&=\frac{\partial g}{\partial \tau_1}
=\frac{1}{8}\left(\left(22-12\sqrt{2}\right)m_1
	-\left(10-\sqrt{2}\right)(m_2+m_3)
	\right),\\
g_2
&=\frac{\partial g}{\partial \tau_2}
=\frac{1}{8}\left(
	\left(10-\sqrt{2}\right)(m_1+m_3)
	-\left(22-12\sqrt{2}\right)m_2
	\right).
\end{split}
\end{equation}
So,
$g_1=g_2=0$ is impossible,
because, the unique solution for $g_1=g_2=0$ is 
$m_1=m_2=-(1+\sqrt{2})m_3<0$.
Therefore, at least one of $g_1$ or $g_2$ is different from zero;
and by the implicit function theorem,
there is a continuation of $g=0$ that passes through
$p_\textrm{II}$.
\end{proof}

\subsection{Numerical calculations for continuation of Euler shapes
}
\begin{figure}
   \centering
   \includegraphics[width=6cm]{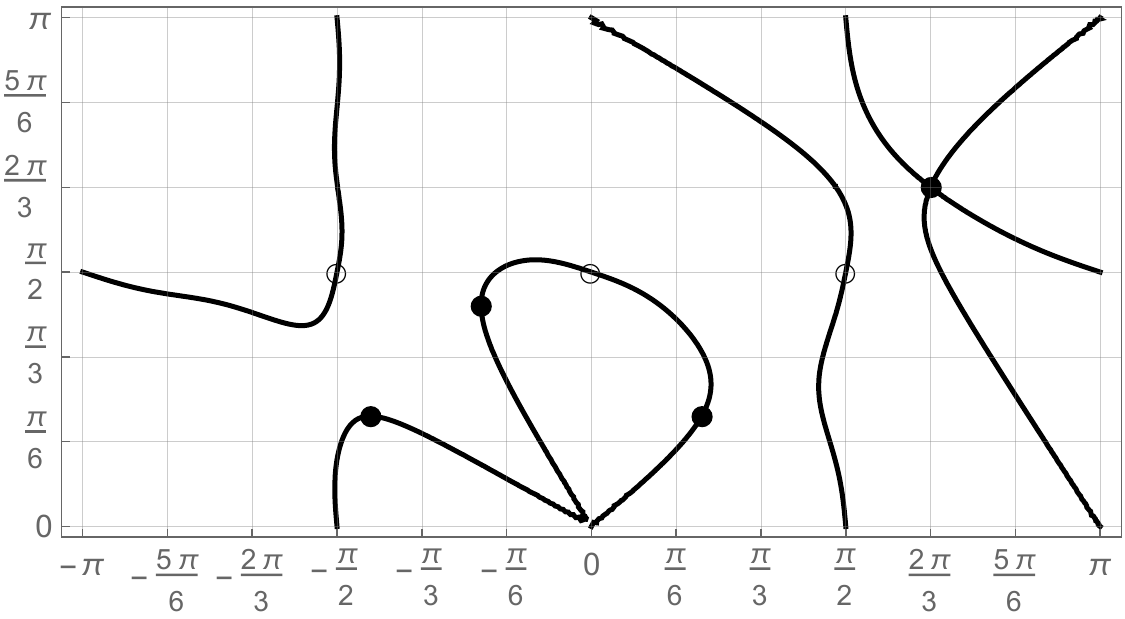}
   \includegraphics[width=6cm]{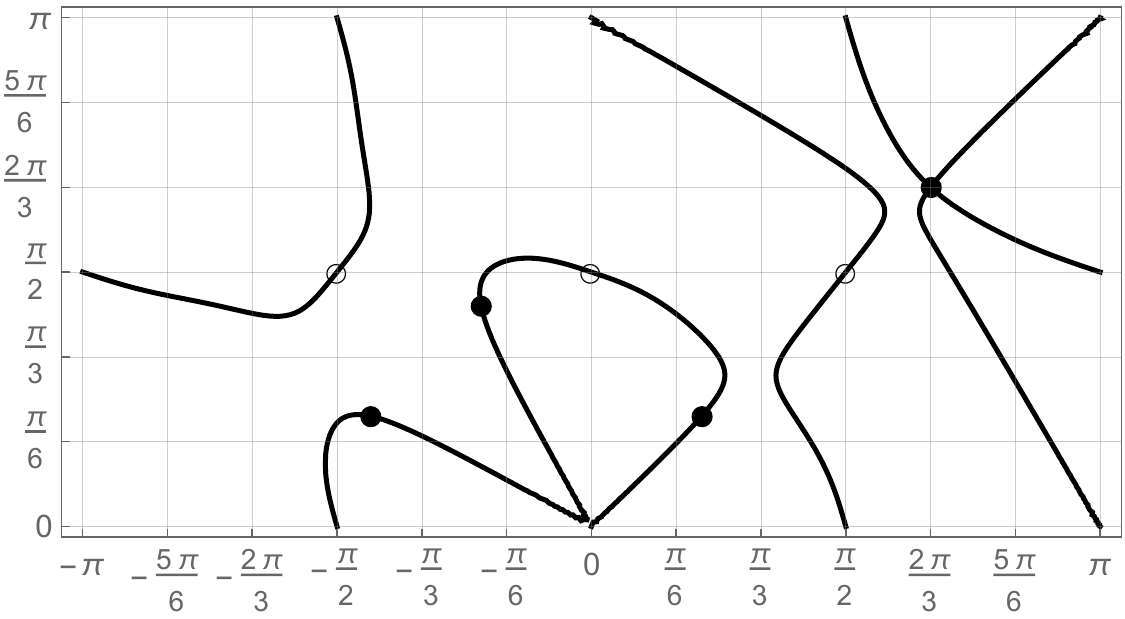}\\
   \includegraphics[width=6cm]{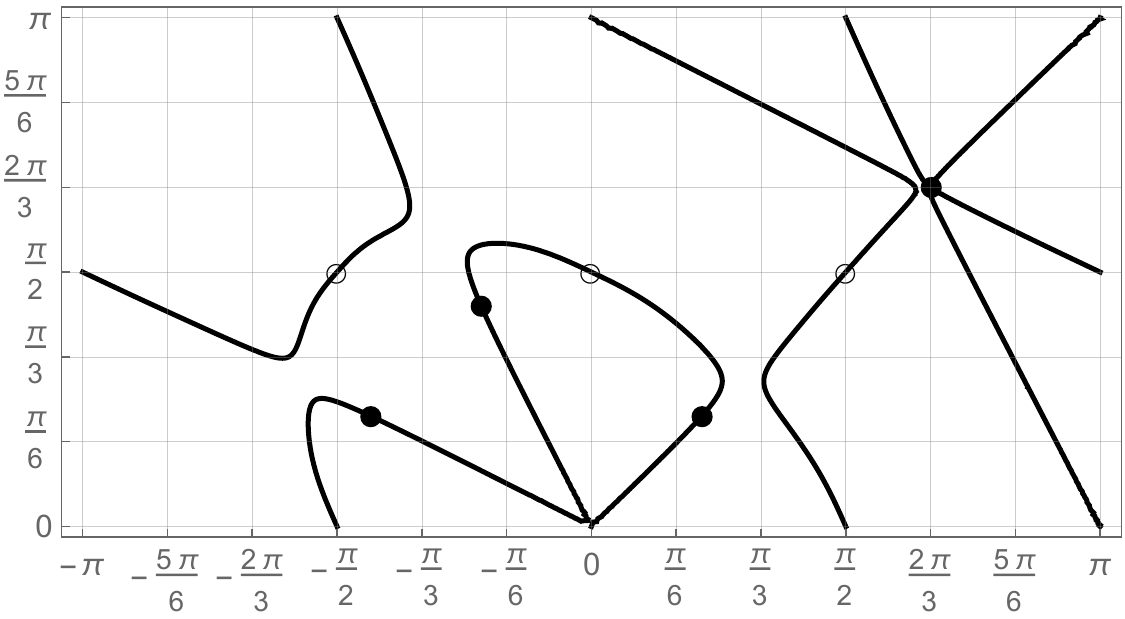}
   \includegraphics[width=6cm]{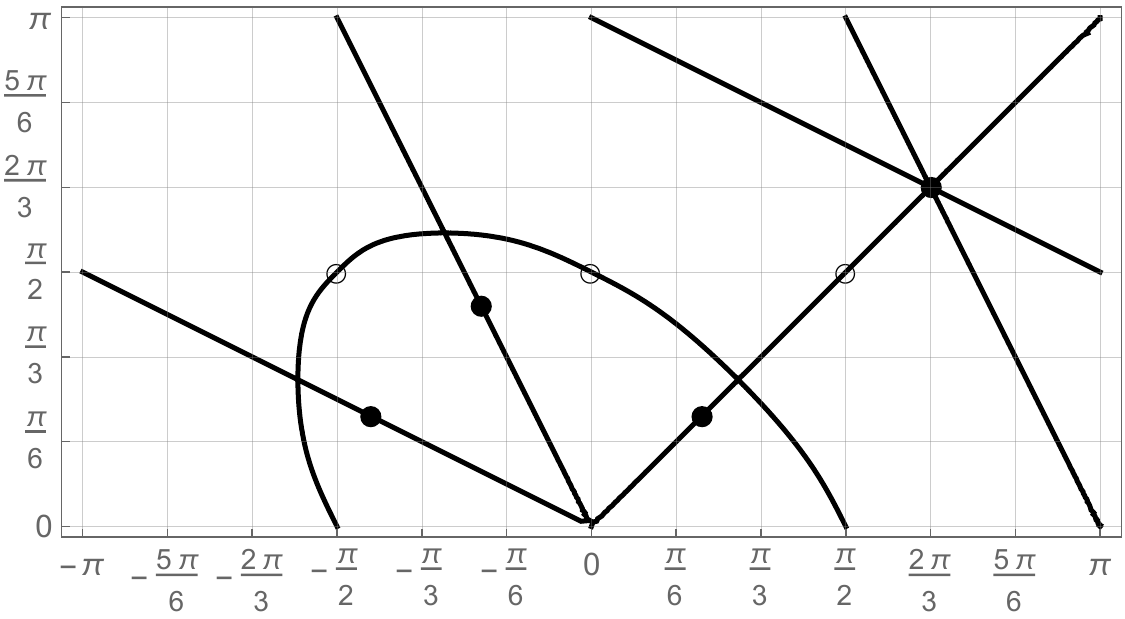} 
   \caption{Continuations of $g(\tau_1,\tau_2)=0$
   in the shape space $U_\textrm{phys}$ for 
   $(m_1,m_2,m_3)=(0.1,0.5,1)$ (upper left),
   $(0.4,0.5,1)$ (upper right), 
   $(0.8,0.9,1)$ (lower left), and
   $(1,1,1)$ (lower right).
   The horizontal and vertical axes are
   $\tau_1=\theta_2-\theta_3$ and $\tau_2=\theta_3-\theta_1$
   respectively.
   In each picture, the
   four black circles represent the mass independent shapes,
   and three hollow circles represent the excluded shapes
   (singular points).
}
   \label{figContinuationsOfERE}
\end{figure}

In Figure \ref{figContinuationsOfERE},
we show continuations of Euler shapes 
for several masses
that are represented by $g=0$.
As you can see,
the continuation curve changes as the masses are changed.
However, as we proved in section \ref{ind-conf}
and shown in Figure \ref{figContinuationsOfERE},
the continuation curve passes through
the (not moved) mass independent Euler shape.

\section{Isosceles or equilateral shapes
when all masses are different}
\label{Isos-eq}

In this section, we consider $RE$ shapes
in the case
$m_i \ne m_j$ for all $i\ne j$,  
which can be or not mass independent. We have the following result.

\begin{proposition}
When all masses are different, the
 isosceles and the equilateral $RE$ shapes are the unique two mass independent Euler shapes on a rotating meridian.
\end{proposition}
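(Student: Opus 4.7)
The plan is to show that under the hypothesis of pairwise distinct masses, any isosceles or equilateral shape that forms a relative equilibrium must already satisfy the mass-independent system \eqref{eqForMassIndependentShape}; Theorem \ref{thm} then identifies it with one of the two claimed shapes.

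First, I would dispose of the non-meridian cases directly. For an equilateral Lagrange shape, substituting $\sigma_{12}=\sigma_{23}=\sigma_{31}=\sigma$ into \eqref{eqForLagrangeII} yields $\lambda_i=(m_j+m_k)(1-\cos\sigma)$, and the condition $\lambda_1=\lambda_2=\lambda_3$ forces $m_1=m_2=m_3$; an analogous substitution covers the isosceles Lagrange case. For an Euler shape on the equator, setting $\phi_1-\phi_2=\phi_2-\phi_3$ in \eqref{eqForEREOntheEquator} immediately yields $m_1=m_3$. In both cases distinctness of the masses gives a contradiction, so only Euler shapes on a rotating meridian remain.

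Working in the variables $\tau_k=\theta_i-\theta_j$ of Section \ref{main}, which satisfy $\tau_1+\tau_2+\tau_3=0$, the non-singular isosceles condition reduces (after relabeling indices) to one of the three equalities $\tau_1=\tau_2$, $\tau_2=\tau_3$, or $\tau_3=\tau_1$: the sign alternative $\tau_i=-\tau_j$ collapses the remaining $\tau$ to zero, which is excluded by $U_\textrm{phys}$. The equilateral shape is the particular instance $\tau_1=\tau_2=2\pi/3$. Assuming $A\ne 0$ and setting
\[
X(\tau)=\frac{s\omega^2}{2A}\sin(2\tau)-\frac{\sin\tau}{|\sin\tau|^3},
\]
equation \eqref{eqThetaForMeridianForCotangent} reads $m_1m_2\,X(\tau_3)=m_2m_3\,X(\tau_1)=m_3m_1\,X(\tau_2)$. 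Under $\tau_1=\tau_2$ one has $X(\tau_1)=X(\tau_2)$, so the second equality becomes $m_3(m_2-m_1)X(\tau_1)=0$; since $m_3>0$ and $m_1\ne m_2$, this forces $X(\tau_1)=X(\tau_2)=0$, and the first equality then gives $X(\tau_3)=0$. This is precisely the mass-independent system \eqref{eqForMassIndependentShape}, so Theorem \ref{thm} implies $\tau\in\{\tau_0,\,2\pi/3\}$. The other two isosceles relabelings are handled by the same argument with permuted indices.

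The main obstacle is the degenerate case $A=0$, where \eqref{eqThetaForMeridianForCotangent} is no longer available and one must revert to \eqref{eqOfMotForTheta2} as in the Appendix. The two mass-independent shapes themselves have $A>0$ under pairwise distinct masses by the explicit expressions for $A^2$ in Section \ref{main}, so they are already captured by the analysis above. To rule out extra isosceles or equilateral $RE$ shapes having $A=0$, one combines the isosceles identity $\tau_i=\tau_j$ with the constraint $A=0$, reducing the problem to a single algebraic relation in the shape variables which a direct check (parallel to the Appendix's treatment of special shapes) shows to be incompatible with \eqref{eqOfMotForTheta2} under the distinctness hypothesis.
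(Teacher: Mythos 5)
Your main argument coincides with the paper's: for an Euler shape on a rotating meridian with $A\ne 0$, the isosceles condition makes two of the three bracketed terms in \eqref{eqThetaForMeridianForCotangent} equal, so the distinctness $m_1\ne m_2$ forces all brackets to vanish, which is exactly the mass-independent system \eqref{eqForMassIndependentShape}, and Theorem \ref{thm} then pins down the two shapes. Your dismissal of the Lagrange and equatorial cases also matches the paper, and your observation that the sign alternative $\tau_i=-\tau_j$ is excluded because it forces the third $\tau$ to vanish is a correct point the paper leaves implicit.

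The one place where your proposal falls short is the degenerate case $A=0$. You assert that combining $\tau_i=\tau_j$ with $A=0$ yields an algebraic relation that ``a direct check shows to be incompatible with \eqref{eqOfMotForTheta2},'' but you neither derive that relation nor perform the check, so as written this step is a claim rather than a proof. Moreover, the framing is heavier than necessary: the paper's Appendix (Proposition \ref{solOfAeq0} and Corollary \ref{AeqZeroAndIsosceles}) shows that every solution of $A=0$ in $U_\textrm{phys}$ satisfies $\cos(2\tau_k)=(m_k^2-m_i^2-m_j^2)/(2m_im_j)$, and equating the expressions for $\cos(2\tau_i)$ and $\cos(2\tau_j)$ gives $(m_i-m_j)\big((m_i+m_j)^2-m_k^2\big)=0$, which under the strict triangle inequality forces $m_i=m_j$. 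So an isosceles shape with $A=0$ simply cannot exist for pairwise distinct masses, and no appeal to the equations of motion is needed. You should replace your sketched ``direct check'' with this computation (or a citation of the Appendix corollary) to close the argument.
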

\begin{proof}
For  Lagrange shapes,
let  
$\sigma_{23}=\sigma_{31}=\sigma$
be the equal arcs.
Then, the condition $\lambda_1=\lambda_2$ 
in \eqref{eqForLagrangeII} for Lagrange shape requires 
\begin{equation}
(m_1-m_2)(\cos\sigma_{23}-1)=0.
\end{equation}
But this is impossible for $m_1\ne m_2$.

For Euler isosceles triangle shape on the equator,
let $\phi_2-\phi_3=\phi_3-\phi_1$.
Then the condition \eqref{eqForEREOntheEquator}
requires
$(m_1-m_2)m_3=0$.
But this is impossible.

For Euler isosceles triangle shape on a rotating meridian
with $A\ne 0$,
let $\theta_2-\theta_3=\theta_3-\theta_1=\tau$.
Then the condition \eqref{eqThetaForMeridianForCotangent} requires
\begin{equation}
\begin{split}
-&m_1m_2\left(\frac{s\omega^2}{2A}\sin(4\tau)
- \frac{\sin(2\tau)}{|\sin(2\tau)|^3}
\right)\\
=&m_2m_3\left(\frac{s\omega^2}{2A}\sin(2\tau)
- \frac{\sin\tau}{|\sin\tau|^3}
\right)\\
=&m_3m_1\left(\frac{s\omega^2}{2A}\sin(2\tau)
- \frac{\sin\tau}{|\sin\tau|^3}
\right).
\end{split}
\end{equation}
Since $m_1\ne m_2$,
the last two lines require that the term inside 
the parentheses is zero.
Then, the term in the first line must be zero, 
which are the conditions for the mass independent shapes
on a rotating meridian.

For a shape with $A=0$,
$\theta_2-\theta_3=\theta_3-\theta_1=\tau$
requires $m_1=m_2$
as shown in  Corollary \ref{AeqZeroAndIsosceles} (see
Appendix A in Section \ref{secSolOfA}).
\end{proof}

\section{Appendix A. Solutions for $A=0$}\label{secSolOfA}
In this section,
the solutions of $A=0$ in $U_\textrm{phys}$
are described,
where $A$ and $U_\textrm{phys}$
are defined by \eqref{defOfA}
and \eqref{defOfUphys}.

\begin{proposition}\label{solOfAeq0}
The solutions of $A=0$ in $U_\textrm{phys}$ are given by,
\begin{equation}\label{cosSinOf2DeltaTheta}
\begin{split}
\cos(2\tau_k)
&=\frac{m_k^2-m_i^2-m_j^2}{2m_im_j},\\
\frac{\sin(2\tau_3)}{m_3}
&=\frac{\sin(2\tau_1)}{m_1}
=\frac{\sin(2\tau_2)}{m_2},
\end{split}
\end{equation}
where
$\tau_k=\theta_i-\theta_j$,
$(i,j,k)=(1,2,3),(2,3,1)$, and $(3,1,2)$;
the masses must satisfy the triangle inequality $m_i+m_j>m_k$
for all choices of $(i,j,k)$,
otherwise there are no solutions.
\end{proposition}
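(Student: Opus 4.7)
My plan is to rewrite the quadratic form defining $A^2$ as the squared modulus of a single complex number, which reduces the statement to planar triangle geometry. Setting $v_k = m_k e^{2i\theta_k}$ and $Z = v_1 + v_2 + v_3$, direct expansion of $Z\bar Z$ yields
\[
|Z|^2 = \sum_\ell m_\ell^2 + 2 \sum_{i<j} m_i m_j \cos(2(\theta_i - \theta_j)) = A^2,
\]
so $A = 0$ is equivalent to the closing condition $Z = 0$: the three planar vectors $v_k$, of respective lengths $m_k$, form the sides of a closed triangle when placed head-to-tail.

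From this closing condition the two formulas follow by standard triangle identities. Rewriting $-v_k = v_i + v_j$ and taking squared norms gives the law of cosines $m_k^2 = m_i^2 + m_j^2 + 2 m_i m_j \cos(2\tau_k)$, which rearranges to the claimed expression for $\cos(2\tau_k)$. For the sine ratio I multiply $Z = 0$ by each $\bar v_\ell$, take the imaginary part, and use $\mathrm{Im}(v_i \bar v_j) = m_i m_j \sin(2(\theta_i - \theta_j))$; the resulting identities collapse to $m_1 m_2 \sin(2\tau_3) = m_2 m_3 \sin(2\tau_1) = m_3 m_1 \sin(2\tau_2)$, each side being twice the signed area of the triangle computed from a different pair of edges, and dividing by $m_1 m_2 m_3$ gives the stated equality of ratios.

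It remains to show that the strict triangle inequality $m_i + m_j > m_k$ for every triple is both necessary and sufficient for solutions to lie in $U_\textrm{phys}$. Sufficiency: the strict inequality gives $(m_i - m_j)^2 < m_k^2 < (m_i + m_j)^2$, so $\cos(2\tau_k) \in (-1, 1)$, hence $\tau_k \not\equiv 0 \pmod{\pi}$ for all $k$ and the point lies in $U_\textrm{phys}$. Necessity splits into two subcases: if the inequality fails strictly, the cosine formula yields a value outside $[-1,1]$ and no real solution exists; the delicate case is equality, e.g.\ $m_2 = m_1 + m_3$, where direct substitution gives $\cos(2\tau_2) = 1$, forcing $\tau_2 \equiv 0 \pmod{\pi}$ and excluding the point from $U_\textrm{phys}$. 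The main conceptual step is the complex-number reformulation $A^2 = |Z|^2$; after that, the argument consists of the law of cosines, a planar cross-product identity, and the boundary analysis just sketched.
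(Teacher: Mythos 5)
Your proof is correct and is essentially the paper's own argument: the paper writes $A^2=\left|\sum_\ell m_\ell e_\ell\right|^2$ with $e_\ell=(\cos(2\theta_\ell),\sin(2\theta_\ell))\in\mathbb{R}^2$ and derives the two formulas from $m_k e_k=-(m_i e_i+m_j e_j)$ via the squared norm and the planar cross product, which is exactly your complex-number computation $A^2=|Z|^2$, law of cosines, and $\mathrm{Im}(v_i\bar v_j)$ identity in different notation. Your handling of the strict versus non-strict triangle inequality also matches the paper's (equality forces $2\tau_k\equiv 0 \bmod 2\pi$, excluded from $U_\textrm{phys}$), so no further changes are needed.
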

\begin{proof}
Since, 
$A^2=\left|\sum_\ell m_\ell e_\ell \right|^2$
with 
$e_\ell = (\cos(2\theta_\ell),\sin(2\theta_\ell))\in \mathbb{R}^2$,
$A=0$ is equivalent to $\sum_\ell m_\ell e_\ell=0$.
This means that
the three vectors $m_\ell e_\ell$ form a triangle
with sides of length $m_\ell$.

Therefore $\{m_k\}$ must satisfy the triangle inequality.
The equality is excluded,
because otherwise at least one of the equations should satisfy
$2\tau_k\equiv 0 \mod 2\pi$
(namely $\tau_k \equiv 0 \mod \pi$)
which is excluded in $U_\textrm{phys}$.

Using $m_k e_k =-(m_i e_i+m_j e_j)$,
$m_k^2=\left|m_i e_i + m_j e_j\right|^2$ and
$0=(m_i e_i+m_j e_j)\times e_k$
we obtain
the equations in \eqref{cosSinOf2DeltaTheta}.
\end{proof}

\begin{cor}
The number of solutions of $A=0$ in $U_\textrm{phys}$
for given $\{m_k\}$ could be $0$ or $4$.
\end{cor}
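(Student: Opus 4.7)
The plan is to leverage Proposition \ref{solOfAeq0}: it already handles the $0$ case, since if any strict triangle inequality fails, the corresponding formula $\cos(2\tau_k) = (m_k^2 - m_i^2 - m_j^2)/(2m_im_j)$ either forces $|\cos(2\tau_k)| > 1$ (no real solution) or gives the boundary value $\cos(2\tau_k) = 1$ when $m_i + m_j = m_k$, which forces $\tau_k \equiv 0 \pmod{\pi}$ and is excluded from $U_\textrm{phys}$. What remains is to show that strict triangle inequality yields exactly $4$ solutions.

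Under strict triangle inequality, each $c_k := \cos(2\tau_k)$ lies in $(-1,1)$, so $\sin(2\tau_k) \neq 0$. Positivity of the masses together with the proportionality $\sin(2\tau_1)/m_1 = \sin(2\tau_2)/m_2 = \sin(2\tau_3)/m_3$ collapses the three a priori independent sign choices $\pm\sqrt{1-c_k^2}$ into a single common sign $\epsilon \in \{+1,-1\}$; this gives the first factor of two.

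For fixed $\epsilon$, the pair $(\cos(2\tau_k), \sin(2\tau_k))$ determines $2\tau_k$ modulo $2\pi$, hence $\tau_k$ modulo $\pi$. The constraint $\tau_2 \in (0,\pi)$ pins $\tau_2$ down uniquely, while $\tau_1 \in (-\pi,\pi)$ leaves two representatives that differ by $\pi$. I would then verify that the derived quantity $\tau_3 = -(\tau_1+\tau_2)$ automatically satisfies its own cosine and sine equations: since $2\tau_1 + 2\tau_2 + 2\tau_3 = 0$, the identity $e^{2i\tau_3} = \overline{e^{2i\tau_1}\cdot e^{2i\tau_2}}$ combined with the known values of $e^{2i\tau_1}, e^{2i\tau_2}$ returns exactly $c_3 + i\epsilon\sqrt{1-c_3^2}$, a trigonometric consequence of the closed-triangle relation $m_1 e_1 + m_2 e_2 + m_3 e_3 = 0$ underlying Proposition \ref{solOfAeq0}. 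Shifting $\tau_1$ by $\pi$ leaves every $2\tau_k$ fixed modulo $2\pi$, so both representatives remain valid solutions, yielding $2 \times 2 = 4$ in total.

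Finally I would confirm admissibility and distinctness. The bound $c_k < 1$ from strict triangle inequality gives $\tau_k \not\equiv 0 \pmod{\pi}$, so $\sin\tau_1$, $\sin\tau_2$, and $\sin(\tau_1+\tau_2) = -\sin\tau_3$ are all nonzero and the solutions lie in $U_\textrm{phys}$. The two $\epsilon$-values give opposite signs of $\sin(2\tau_1)$, and for each $\epsilon$ the two lifts of $\tau_1$ differ by $\pi$, so all four solutions are distinct. The most delicate point I expect to address carefully is the interplay between the common sign $\epsilon$ and the $\pi$-shift ambiguity of $\tau_1$, together with verifying that the equation for the dependent variable $\tau_3$ is automatically inherited rather than an independent restriction; every other step reduces to invoking Proposition \ref{solOfAeq0} and checking a short trigonometric identity.
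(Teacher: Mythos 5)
Your argument is correct and follows essentially the same route as the paper's proof: both reduce the count to the discrete choices left by Proposition~\ref{solOfAeq0}, namely a common sign for the $\sin(2\tau_k)$ (your $\epsilon$; the paper's matching of the signs of $\sin(2\tau_1)$ and $\sin(2\tau_2)$) times the two lifts of $\tau_1$ modulo $\pi$ into $(-\pi,\pi)$, giving $2\times 2=4$, with the paper simply listing the four solutions explicitly in terms of $\alpha_1,\alpha_2$. Your explicit verification that the equations for the dependent angle $\tau_3$ are automatically inherited from the closed-triangle relation is a detail the paper leaves implicit, but it does not change the approach.
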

\begin{proof}
If the masses do not satisfy the triangle inequality,
the number of solutions is zero.

For the masses that satisfy the triangle inequality,
by 
the first equation of \eqref{cosSinOf2DeltaTheta},
\begin{equation}\label{eqForCosTau}
\cos\tau_k
=\pm \sqrt{\frac{m_k^2-(m_i-m_j)^2}{4m_im_j}}
\,\ne 0.
\end{equation}

Now, let
\begin{equation}
\alpha_k=\arccos\sqrt{\frac{m_k^2-(m_i-m_j)^2}{4m_im_j}}
\in (0,\pi/2)
\mbox{ for } k=1,2,
\end{equation}
be one of the solution of \eqref{eqForCosTau}.
Then the solutions of this equation are
\begin{equation}
\begin{split}
\tau_1&=
-\pi+\alpha_1,\quad
-\alpha_1, \quad
\alpha_1, \quad
\pi-\alpha_1
	\in (-\pi,\pi),\\
\tau_2&=\alpha_2, \quad \pi-\alpha_2 \in (0,\pi).
\end{split}
\end{equation}
By the second line of \eqref{cosSinOf2DeltaTheta},
$\sin(2\tau_1)$ and $\sin(2\tau_2)$ must have the
same sign,
therefore, the solutions of \eqref{cosSinOf2DeltaTheta}
are the following four,
\begin{equation}
(\tau_1,\tau_2)
=
(-\pi+\alpha_1,\alpha_2), \quad
(-\alpha_1,\pi-\alpha_2), \quad
(\alpha_1,\alpha_2), \quad
(\pi-\alpha_1,\pi-\alpha_2).
\end{equation}
\end{proof}

\begin{cor}\label{corSolsOfAeqZeroEqualMasses}
For equal masses case,
the four solutions of $A=0$ in $U_\textrm{phys}$
are
$(\tau_1,\tau_2)=(-2\pi/3,\pi/3)$, \,\,
$(-\pi/3,2\pi/3)$, \,\,
$(\pi/3,\pi/3)$, \,\,
$(2\pi/3,2\pi/3)$.
\end{cor}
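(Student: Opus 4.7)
The plan is to invoke Proposition \ref{solOfAeq0} and the preceding Corollary directly, specializing the general four-point parameterization to the symmetric case $m_1=m_2=m_3=m$. No new analysis is needed; the proof is a one-line arithmetic substitution plus a sign-consistency check.

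First, I would note that equal masses trivially satisfy the strict triangle inequality $m+m>m$, so Proposition \ref{solOfAeq0} is applicable and the preceding Corollary's count of exactly four solutions in $U_\textrm{phys}$ is realized. Second, I would substitute $m_k=m$ into the first equation of \eqref{cosSinOf2DeltaTheta}:
\begin{equation*}
\cos(2\tau_k)=\frac{m^2-m^2-m^2}{2m^2}=-\tfrac{1}{2}\qquad\text{for all }k,
\end{equation*}
or equivalently, via \eqref{eqForCosTau}, $\cos\tau_k=\pm\sqrt{m^2/(4m^2)}=\pm 1/2$. Hence the primary branch angle defined in the Corollary collapses to the common value
\begin{equation*}
\alpha_1=\alpha_2=\arccos(1/2)=\pi/3.
\end{equation*}

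Third, I would plug $\alpha_1=\alpha_2=\pi/3$ into the four-solution list
\begin{equation*}
(\tau_1,\tau_2)\in\bigl\{(-\pi+\alpha_1,\alpha_2),\,(-\alpha_1,\pi-\alpha_2),\,(\alpha_1,\alpha_2),\,(\pi-\alpha_1,\pi-\alpha_2)\bigr\}
\end{equation*}
and read off the four pairs
\begin{equation*}
(-2\pi/3,\pi/3),\quad(-\pi/3,2\pi/3),\quad(\pi/3,\pi/3),\quad(2\pi/3,2\pi/3),
\end{equation*}
which is exactly the claim. The only remaining sanity check is the sign-matching relation in the second line of \eqref{cosSinOf2DeltaTheta}, which for $m_1=m_2=m_3$ reduces to $\sin(2\tau_1)=\sin(2\tau_2)=\sin(2\tau_3)$; evaluating shows each listed pair yields $\pm\sqrt{3}/2$ with consistent signs, so no candidate is spurious and no solution is missed.

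There is no real obstacle here: the whole content of the corollary is the evaluation $\arccos(1/2)=\pi/3$ and the observation that in the symmetric case the two branch angles $\alpha_1,\alpha_2$ coincide. The only thing one must be careful about is to not double-count: the four pairs above are genuinely distinct elements of $U_\textrm{phys}$ because $\tau_2\in(0,\pi)$ restricts $\tau_2\in\{\pi/3,2\pi/3\}$ and then the sign-matching forces the listed $\tau_1$ in each case.
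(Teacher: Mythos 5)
Your proposal is correct and follows essentially the same route as the paper's own proof: specialize the first equation of \eqref{cosSinOf2DeltaTheta} (equivalently \eqref{eqForCosTau}) to equal masses to get $\cos\tau_k=\pm 1/2$, hence $\alpha_1=\alpha_2=\pi/3$, and read off the four pairs from the preceding Corollary's list. Your explicit sign-consistency check is a small addition the paper leaves as ``obvious,'' but it is the same argument.
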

\begin{proof}
For the equal masses case,
$\cos\tau_k=\pm 1/2$,
therefore $\alpha_1=\alpha_2=\pi/3$.
Then, the solutions are obviously
the four given above.
\end{proof}

\begin{cor}\label{AeqZeroAndIsosceles}
The shape $A=0$ and $\tau_i=\tau_j$
in $U_\textrm{phys}$
is realised only when $m_i=m_j$.
\end{cor}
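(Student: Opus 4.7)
The plan is to read the corollary directly off Proposition \ref{solOfAeq0}. By the cyclic symmetry of the labelling I may assume $\tau_1=\tau_2$; the cases $\tau_2=\tau_3$ and $\tau_3=\tau_1$ follow by an identical argument after relabelling. Proposition \ref{solOfAeq0}, applied to the hypothesis $A=0$, supplies two pieces of information that I intend to use: the strict triangle inequality $m_i+m_j>m_k$ holds for every cyclic triple $(i,j,k)$, and the sine proportionality
\[
\frac{\sin(2\tau_1)}{m_1}=\frac{\sin(2\tau_2)}{m_2}=\frac{\sin(2\tau_3)}{m_3}
\]
is in force.

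The key intermediate step I would establish is that $\sin(2\tau_k)\ne 0$ on the $A=0$ locus inside $U_\textrm{phys}$. The definition of $U_\textrm{phys}$ already excludes $\tau_k\equiv 0\pmod{\pi}$, so the only remaining way to have $\sin(2\tau_k)=0$ is $\tau_k=\pm\pi/2$, i.e.\ $\cos(2\tau_k)=-1$. Substituting this value into the cosine formula of Proposition \ref{solOfAeq0} produces $m_k^2=(m_i-m_j)^2$, i.e.\ $m_k=|m_i-m_j|$, contradicting the strict triangle inequality just secured.

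With $\sin(2\tau_k)\ne 0$ in hand, the hypothesis $\tau_1=\tau_2$ forces $\sin(2\tau_1)=\sin(2\tau_2)\ne 0$, and the sine proportionality then collapses to $m_1=m_2$, which is exactly the claim. I expect the small subtlety around excluding $\tau_k=\pm\pi/2$ to be the only point needing any real thought; everything else is a direct citation. As a backup path, I would equate the two cosine formulas $\cos(2\tau_1)=\cos(2\tau_2)$ and clear denominators, after which the resulting polynomial in the masses factors as $(m_1-m_2)\bigl((m_1+m_2)^2-m_3^2\bigr)=0$; the second factor is strictly positive by the triangle inequality, so again $m_1=m_2$.
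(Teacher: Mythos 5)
Your proposal is correct and follows the same route as the paper, which simply declares the corollary ``obvious by equation \eqref{cosSinOf2DeltaTheta}''; you are reading the conclusion off the sine proportionality (or, in your backup, the cosine formula) of Proposition \ref{solOfAeq0} exactly as intended. The only added content is your careful exclusion of $\sin(2\tau_k)=0$ via the strict triangle inequality, a detail the paper leaves implicit but which is genuinely needed for the sine-proportionality argument to close.
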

\begin{proof}
It is obvious by the 
equation \eqref{cosSinOf2DeltaTheta}.
\end{proof}

\subsection*{Acknowledgements} The second author (EPC) has been partially supported by Asociación Mexicana de Cultura A.C.


\begin{thebibliography}{99}

\bibitem{Borisov1} Borisov A.V., Mamaev I.S., Bizyaev I.A. \emph{The Spatial Problem of $2$ Bodies on a Sphere, Reduction and Stochasticity}, Regular and Chaotic Dynamics {\bf 216}-5, (2016), 556-580.

\bibitem{Borisov2} Borisov A. V.,  Mamaev I. S., Kilin  A. A., {\it Two-body problem on a sphere: reduction, stochasticity, periodic orbits}, Regular and Chaotic Dynamics {\bf 9}-3, (2004), 265-279.

\bibitem{Diacu-EPC1} Diacu F., P\'erez-Chavela E., Santoprete M., {\it The n-body problem in spaces of constant curvature. Part I: Relative equilibria.} J. Nonlinear Sci. \textbf{22} (2012), no. 2, 247--266.

\bibitem{Diacu-EPC2} Diacu F., P\'erez-Chavela E., Santoprete M., {\it The n-body problem in spaces of constant curvature. Part II: Singularities.} J. Nonlinear Sci. \textbf{22} (2012), no. 2, 267--275.

\bibitem{Diacu1} Diacu F., Relative equilibria of the curved N-body problem. Atlantis Studies in Dynamical Systems, Atlantis Press, Amsterdan, Paris, Beijing \textbf{1}, 2012.

\bibitem{Diacu3} Diacu F.and P\'erez-Chavela E.,
\emph{Homographic solutions of the curved 3-body problem}, Journal of Differential Equations {\bf 250},  (2011), 340-366.

\bibitem{zhu2} Diacu F., Zhu S., {\it Almost all $3$--body relative equilibria on $\mathbb{S}^2$ and $\mathbb{H}^2$ are inclined.} Discrete and Continuous Dynamical Systems, Series S \textbf{13}-4 (2020), 1131-1143.


\bibitem{F-P} Fujiwara T. P\'erez-Chavela E.,  {\it Three-Body Relative Equilibria on $\mathbb{S}^2$.} Regular and Chaotic Dynamics {\bf 28}, Nos. 4-5. (2023), 686--702. 

\bibitem{Kilin} Kilin  A. A. {\it Libration points in spaces $S^2$ and $L^2$}, Regular and Chaotic Dynamics {\bf 4}-1, (1999), 91--103.

\bibitem{L} Lagrange J.L. {\it Essai sur le probleme des trois corps.} Tome 6, Chapitre II. Prix de l'Academie Royale des Sciences de Paris, 1772.

\bibitem{M-S} Mart\'inez R. Sim\'o C.,  {\it On the stability of the Lagrangian homographic solutions in a  curved three body problem on $\mathbb{S}^2$.} Discrete Cont. Dyn. Syst. Ser. A. {\bf 33} (2013), 1157--1175. 

\bibitem{Mtz} Mart\'inez R., Sim\'o C. {\it Relative equilibria of the restricted three-body problem in curved spaces}, Celestial Mechanics and Dynamical Astronomy {\bf 128}, (2017), 221--259.

\bibitem{Montanelli}
Montanelli H., Gushterov N.I. {\it Computing planar and spherical choreographies} 
SIAM Journal of Applied Dynamical Systems, {\bf 15}-1, (2016),  235-256.

\bibitem{EPC1} P\'erez-Chavela E. and Reyes-Victoria J.G.,
 \emph{An intrinsec approach in the curved $n$-body problem. The positive curvature case}, Trans. Amer. Math. Soc. {\bf 364}-7, (2012),  3805-3827.
 
 \bibitem{EPC2} P\'erez-Chavela E. and S\'anchez-Cerritos J.M. \emph{Euler-type relative equilibria in spaces of constant curvature and their stability}, Canad. J.  Math. {\bf 70}-2, (2018), 426-450.

\bibitem{tibboel} Tibboel P.,  {\it Polygonal homographic orbits in spaces of constant curvature.} Proc. Amer. Math. Soc.  \textbf{141} (2013), 1465-1471.

\bibitem{zhu} Zhu S., {\it Eulerian relative equilibria of the curved 3-body problem.} Proc. Amer. Math. Soc. \textbf{142} (2014), 2837-2848.

\end{thebibliography}
\end{document}